\begin{document}
	\title[Cut and Initial Sequents]{Cut elimination for systems of \\ transparent truth with restricted initial sequents}
	\author{Carlo Nicolai\\King's College London}
	\date{}
	\subjclass[2000]{Primary 03F05; Secondary 03A99.}
	\thanks{Thanks to Peter Schr\"oder-Heister for pointing me to his work on definitional reflection that studies the interaction between cut, contraction, and restricted initial sequents studied in this work. Thanks to Andreas Fjellstad for clarifying the role of identity axioms in the systems I discuss. I thank Volker Halbach, Graham Leigh, Beau Mount, Luca Tranchini for discussing with me the ideas contained in the paper. Special thanks to Luca Castaldo, Martin Fischer, Lorenzo Rossi for detailed comments.  }
	\address{\normalfont carlonicolai6@gmail.com\;;\;carlo.nicolai@kcl.ac.uk}

\maketitle

\begin{abstract}
	The paper studies a cluster of systems for fully disquotational truth based on the restriction of initial sequents. Unlike well-known alternative approaches, such systems display both a simple and intuitive model theory and remarkable proof-theoretic properties. We start by showing that, due to a strong form of invertibility of the truth rules, cut is eliminable in the systems via a standard strategy supplemented by a suitable measure of the number of applications of truth rules to formulas in derivations. Next, we notice that cut remains eliminable when suitable arithmetical axioms are added to the system. Finally, we establish a direct link between cut-free derivability in infinitary formulations of the systems considered and fixed-point semantics. Noticeably, unlike what happens with other background logics, such links are established without imposing any restriction to the premisses of the truth rules.
	
\end{abstract}


\section{Introduction}

Due to the Liar paradox, fully disquotational approaches to truth -- that is, satisfying the rules ($\T${\sc l}) and ($\T${\sc r}) below -- require a non-classical logical treatment. Among the nonclassical options, a standard approach is to restrict operational rules for connectives that play a crucial role in the derivation of the inconsistency, such as negation or material implication. To this family of approaches belong the various paracomplete or paraconsistent accounts of fully disquotational truth defended in the literature (see e.g.~\cite{fie08,pri05,kre88,bea09,haho06}). 

Formal systems for transparent truth based on restrictions of operational rules and  featuring unrestricted rules for semantic notions do not sit well with standard strategies to fully or partially eliminate applications of the cut rule.\footnote{Actually, this generalizes to partial approaches to truth based on supervaluations. We shall elaborate on this point later.} To explain why this is so, let us focus on the case of unrestricted truth rules
\begin{align*}
	& \AxiomC{$\Gamma, \vphi\Ra \Delta$}\RLB{($\T${\sc l})}
			\uinf{\Gamma, \T\corn{\vphi}\Ra \Delta}
		\DisplayProof
	&& \AxiomC{$\Gamma\Ra \vphi,\Delta$}\RLB{($\T${\sc r})}
			\uinf{\Gamma\Ra \T\corn{\vphi}, \Delta}
		\DisplayProof
\end{align*}
In the rules, $\corn{\cdot}$ is a quotation device that yields a canonical name for each sentence of the language. 
	When one wants to eliminate a cut on truth ascriptions   $\T\corn{\vphi}$ that are both obtained from {\sc ($\T$-l)} and {\sc ($\T$-r)}, a natural thought is to cut on the sentence $\vphi$ in the premises of the application of these rules. However it's clear that $\T\corn{\vphi}$ is an atomic formula, whereas $\vphi$ may be an extremely (logically) complex sentence. Therefore a simple induction on the logical complexity of the cut formula, on which cut-elimination strategies are traditionally based, will not work. One has also to keep track of the number of applications of truth rules and induct over it in the main induction hypothesis. 

There are several alternatives for such tracking devices. One option is to assign a measure to sequents, i.e.~to nodes in the derivation tree. However, in the context of logics which restrict the operational rules, this strategy can only be carried out if one {\it restricts} the truth rules, by disallowing contexts in the premises. Once this restriction takes place, cut can be fully eliminated. Of course, the price to pay is the adoption of properly weaker truth rules. This is for instance the strategy considered in \cite{can90,figr18} in the context of Strong Kleene logic and supervaluational logic respectively. Alternatively, one can restrict the contraction rule, keep the node-based measure of applications of semantic rules, and still obtain a full cut elimination proof \cite{gri82,can03}.\footnote{\cite{zar11} has also presented a cut-elimination argument for an (infinitary) transparent theory of truth over a contraction-free logic. However, both \cite{daro18} and \cite{fje20} cast some doubts on the logical coherence and applicability of the proposal.} However, the restriction of contraction has its own drawbacks. For instance, whereas the systems based on the restriction of operational rules are sound -- and, in a suitably controlled environment, also complete -- with respect to a class of  fixed-point models  \cite{kri75},\footnote{More on fixed point semantics in  \S\ref{sec:seminf}.} no such link exists between contraction-free systems and fixed-point semantics or natural alternatives.

In this paper I consider a cluster of theories of transparent truth that display both a direct link with fixed-point semantics, but also desirable proof-theoretic properties culminating in the eliminability of cut. Such systems are based on a restriction of initial sequents to formulas not containing the truth predicate. The proof-theoretic arguments given below crucially rest on the adoption of a measure for formulas in derivations, called $\T$-complexity, that keeps track of the number of truth rules applied to ancestors of a single formula in the given proof. While the notion of $\T$-complexity is not new,\footnote{Similar measures of complexity have been considered by \cite{hal99} and \cite{lei15}.} it is its combination with the restriction of initial sequents in the context of transparent truth that is the main focus of the paper. Such connection has been studied already for a propositional logic extended with rules for definitional reflection in \cite{sch16}.\footnote{The key ideas of this paper were presented in T\"ubingen in 2017, where Peter Schr\"oder-Heister pointed to his independent work on the idea. The current shape of the paper and results benefited greatly from the study of Schr\"oder-Heister's work on definitional reflection. Such exchanges also are at the root of the formulation of an infinitary Tait system with restricted initial sequents, akin to $\lpcinf$ below, in Martin Fischer's Habilitation Thesis, \emph{Modal Predicates and Their Interaction}, Munich, 2018.}

	\subsection*{Plan and structure of the paper}
	
	In section \ref{sec:lpc}, I study the proof-theory of a `logic' of truth $\lpc$, that is a system with no non-logical initial sequents and rules besides {\sc ($\T$-l)} and {\sc ($\T$-r)}. The section focuses on the definition of the main measure for application of truth rules called $\T$-complexity (Definition \ref{dfn:tcompl}), the proof of the strong invertibility property of the main $\lpc$-rules (Lemma \ref{lem:invlog}), and culminates with the eliminability of cut in $\lpc$ essentially achieved in Lemma \ref{lem:redlpt}. In  the short section \ref{sec:ariext}, we extend the results of section \ref{sec:lpc} to extensions of $\lpc$ with (geometric) arithmetical axioms by employing the study of the proof-theory of geometric axioms from \cite{nevo11}. This yields a uniform conservativeness proof of local truth rules over the base theory (Proposition \ref{prop:conse}). Section \ref{sec:seminf} studies the connection of cut-free provability and an infinitary extension of $\lpc$ with fixed point semantics (Lemmata \ref{lem:grocon} and \ref{lem:mfpsys}). To achieve this, $\T$-complexity is extended to the transfinite, and cut elimination is proved for an infinitary extension of $\lptn$ (Proposition \ref{prop:ceinf}). 
	
	As the reader will notice, the cut elimination strategy introduced in \S\ref{sec:lpc} features prominently also in the subsequent sections. Of course, an alternative way of presenting the paper would have been to start with the arithmetical or the infinitary setting, and then inferring the results of \S\ref{sec:lpc} as immediate corollaries. The current structure of the paper is motivated by the intention of presenting  the main structural lemmata in a simple setting in \S\ref{sec:lpc}, so that in the subsequent sections the focus could be mainly on the adjustments required by richer frameworks and on other properties such as the connections with fixed-point semantics. 


\section{Logics for transparent truth with restricted initial sequents}\label{sec:lpc}

We start with a first-order language $\mc{L}$ with logical constants $\neg,\land,\forall,\bot,\top$. We let:
%
%
	\begin{align*}
		&\lt:=\mc{L}\cup\{\T\},\text{for $\T$ a unary predicate symbol.}
	\end{align*}
We write ${\rm AtFml}_{\lt}$ for the set of atomic formulas of $\lt$, and ${\rm Sent}_{\lt}$ for the set of sentences of $\lt$. It is useful \emph{not} to regard $\top,\bot$ as members of ${\rm AtFml}_{\lt}$. 
%
%
The \emph{logical complexity} $\lth{\vphi}$ of a formula $\vphi$ of $\lt$  is defined inductively  as the number of nodes in the maximal branch of its syntactic tree:
	\[
		\lth{\vphi}=\begin{cases}
						0,&\text{if $\vphi$ is atomic or $\bot,\top$},\\
						\lth{\psi}+1,&\text{if $\vphi\equiv\neg\psi$ or $\vphi\equiv \forall x\psi$,}\\
						\maxf(\lth{\psi},\lth{\chi})+1,&\text{if $\vphi\equiv \psi\land\chi$.}
					\end{cases}
	\]

\noindent To properly formulate our truth rules in the simple setting studied in this section, we follow the standard practice of assuming that for any sentence $\vphi\in \lt$, there is a term $\corn{\vphi}$ playing the logical role of its name \cite{kre88,can03,rip12}. In general, there are good reasons to require much more than a simple, essentially metatheoretic quotation device and work with a fully fledged formal syntax in the background. We will see later on that much of our discussion can be transferred to such richer settings. 

In what follows, $\Gamma,\Delta,\Theta,\Lambda\ldots$ stand for \emph{finite multisets} of formulas of $\lt$ -- and the same notation will be employed for the different languages considered below. Expressions of the form $\Gamma \Ra \Delta$ are \emph{sequents}. We assume a standard notion of substitution and write $\Gamma(t/x)$ for the result of replacing all free occurrences of $x$ in all formulas in $\Gamma$ with the term $t$, which is assumed to be free for $x$ in such formulas. For a formula $\vphi$, we denote with ${\rm FV}(\vphi)$ the set of its free variables. ${\rm FV}(\Gamma)$ denotes the set of free variables in formulas in $\Gamma$. 

The system $\lpc$ is essentially characterized by all operational rules of classical logic, fully disquotational truth rules, and crucially by a restriction of initial sequents to principal formulas that are atomic and do not contain $\T$. In the terminology of \cite{trsc03}, $\lpc$ is a ${\bf G3}$ system. The label $\lpc$ stands for `logic of grounded truth'. This choice is informally motivated by the fact that one can read the sequent $\Gamma\Ra \Delta$ in $\lpc$ as stating that either some member of $\Gamma$ is determinately false, or some member of $\Delta$ is determinately true. This informal picture will be refined by the semantic considerations of Section \ref{sec:seminf} -- and Lemma \ref{lem:grocon} in particular. 
\begin{dfn}[$\lpc$]
The system $\lpc$ in $\lt$ features the following initial sequents and rules:\footnote{\label{foo:disexi}We omitted the standard ${\bf G3}$-rules for $\vee,\exists$, which are nonetheless admissible in the systems below by employing the usual definitions of $\vee$ and $\exists$ in terms of $\land,\neg,\forall$.}

\begin{align*}
	& \text{{\sc \small{(ref$^-$)}}}\;\;\;\AxiomC{$\Gamma, \vphi \Ra \vphi,\Delta$}\noLine
		\uinf{\textup{with $\vphi\in {\rm AtFml}_{\mc{L}}$} } \DisplayProof
		&&\AxiomC{$\Gamma\Ra \Delta,\vphi$}\AxiomC{$\vphi,\Gamma\Ra \Delta$}\Llb{cut}\BinaryInfC{$\Gamma \Ra \Delta$}\DisplayProof\\[10pt]
	&  \text{{\sc \small{($\top$)}}}\;\;\ax{\Gamma\Ra\top,\Delta}
	\DisplayProof
	&& \text{{\sc \small{($\bot$)}}}\ax{\Gamma,\bot\Ra\Delta}
	\DisplayProof\\[10pt]
	& \AxiomC{$\Gamma, \vphi\Ra \Delta$}\Llb{$\T${\sc l}}
			\uinf{\Gamma, \T\corn{\vphi}\Ra \Delta}
		\DisplayProof
	&& \AxiomC{$\Gamma\Ra \vphi,\Delta$}\Llb{$\T${\sc r}}
			\uinf{\Gamma\Ra \T\corn{\vphi}, \Delta}
		\DisplayProof\\[10pt]
	& \ax{\Gamma\Ra\vphi, \Delta}\Llb{$\neg$l}
		\uinf{\Gamma,\neg\vphi\Ra \Delta}
		\DisplayProof
		&&\ax{\Gamma,\vphi\Ra \Delta}\Llb{$\neg$r}
		\uinf{\Gamma\Ra\neg\vphi, \Delta}
		\DisplayProof
		\\[10pt]
	&\AxiomC{$\Gamma,\vphi,\psi\Ra \Delta$}\Llb{$\land$l}\UnaryInfC{$\Gamma,\vphi\land \psi\Ra \Delta$}\DisplayProof
		&&\AxiomC{$\Gamma\Ra \vphi,\Delta$}\AxiomC{$\Gamma\Ra\psi,\Delta$}\Llb{$\land$r}\BinaryInfC{$\Gamma \Ra \Delta,\vphi\land \psi$}\DisplayProof\\[10pt]
	&\AxiomC{$\Gamma,\forall x\vphi,\vphi(s/x)\Ra \Delta$}\Llb{$\forall$l}\UnaryInfC{$\Gamma,\forall x\vphi\Ra \Delta$}\DisplayProof
		&&\AxiomC{$\Gamma\Ra \vphi(y/x),\Delta$}\Llb{$\forall$r}\RLB{$y\notin{\rm FV}(\Gamma,\Delta,\forall x\vphi)$}\UnaryInfC{$\Gamma \Ra \Delta,\forall x\vphi$}\DisplayProof
\end{align*}
\end{dfn}

The following measures of complexity are also standard. We employ the usual notions of \emph{premisses} and \emph{conclusion} of rules, \emph{principal}, \emph{active}, \emph{side} formulas \cite{sch77,trsc03}:
	\begin{enumeratei}
		\item Given rules that are at most $\alpha$-branching,  the \emph{length} $d$ of a derivation $\mc{D}$ is
		\[
			{\rm sup}\{d_\gamma+1\sth \gamma<\beta\}
		\]
		where $\mc{D}_\gamma$ ($\gamma<\beta\leq \alpha$) are $\mc{D}$'s direct subderivations.
		\item The \emph{rank} of an application of {\sc cut} on $\vphi$ is $\lth{\vphi}+1$. The \emph{cut rank} of a derivation $\mc{D}$ is the maximum of the ranks of cut formulas in $\mc{D}$. 
	\end{enumeratei}

	It will sometimes be useful to refer directly to different occurrences of the same (qua syntactic object) formula in a derivation (cf.~\cite{sch16}). When writing, say, 
\begin{equation}\label{eq:occnot}
	\ax{\gamstrjn\Ra\delstrkm, \vphi}
	\uinf{\gamstrjon\Ra\delstrkom,\psi}
	\DisplayProof
\end{equation}
we assume that occurrences of $\gamma^j_i$, with $1\leq i\leq n$ correspond precisely to occurrences of $\gamma^{j+1}_i$ -- i.e.~they are distinct occurrences of the same formula -- and similarly for the $\delta$'s. As an abbreviation, this will be generalized to multisets of sentences: I occasionally write $\Gamma^j$ instead of $\gamstrjn$. It should be clear that superscripts are not part of the language.

The idea behind the following measure on proofs, that we call $\T$-complexity, plays an important role in recent proof-theoretic studies of primitive truth predicates \cite{hal99,lei15}. It essentially tracks the number of truth rules applied to formulas in derivations. If contraction is present, such measure is not easy to define and employ.\footnote{It is in fact the presence of contraction that led to error in its applications in \cite{hal99}, which are rectified by \cite{lei15}, but only for typed truth, not type-free truth.} We will see that the restriction of initial sequents and the absence of explicit contraction enable us to apply the notion of $\T$-complexity in the general case of type-free, disquotational truth. 
\begin{dfn}[$\T$-complexity]\label{dfn:tcompl}
	The ordinal $\T$-complexity of an occurrence of a formula $\vphi$ of $\lt$ in a derivation $\mc{D}$ in $\lpc$ -- in symbols, $\tau_{\mc{D}}(\vphi)$ is defined inductively as follows:
	\begin{enumeratei}
		\item $\tau_\mc{D}(\vphi)=0$ if $\vphi\in \mc{L}$;
		\item\label{case:weatc} If $\mc{D}$ contains only an initial sequent node ({\sc ref},$\top$,$\bot$), then $\tau_{\mc{D}}(\vphi)=0$ for all formulas in it. 
		\item If $\mc{D}$ ends with 
			\[
				\ax{\Gamma\Ra\Delta,\psi}
					\uinf{\Gamma \Ra \Delta,\T\corn{\psi}}
				\DisplayProof
			\]	
			then $\tau_\mc{D}(\T\corn{\psi})=\tau_\mc{D}(\psi)+1$ and the $\T$-complexity of the formulas in $\Gamma,\Delta$ is unchanged. Similarly for \textup{({\sc $\T$l})}.
		\item If $\mc{D}$ ends with 
		\[
			\ax{\Gamma\Ra\Delta,\vphi}
				\uinf{\neg\vphi,\Gamma\Ra\Delta}
			\DisplayProof
		\] 
		then $\tau_\mc{D}(\vphi)=\tau_\mc{D}(\neg\vphi)$ and the $\T$-complexity of the formulas in $\Gamma,\Delta$ is unchanged. Similarly for {\sc $(\neg$r)} and {\sc ($\forall$r)}.
		\item If $\mc{D}$ ends with
		\[
			\ax{\Gamma,\vphi,\psi\Ra\Delta}
				\uinf{\vphi\land\psi,\Gamma\Ra\Delta}
			\DisplayProof
		\] 
		then $\tau_\mc{D}(\vphi\land\psi)=\maxf(\tau_\mc{D}(\vphi),\tau_\mc{D}(\psi))$ and the
$\T$-complexity of the formulas in $\Gamma,\Delta$ is unchanged. 
		\item\label{case:tccr} If $\mc{D}$ ends with -- cf.~notational convention after \eqref{eq:occnot},
			\[
				\ax{\Gamma^j\Ra \vphi,\Delta^k}
				\ax{\Gamma^l\Ra \psi,\Delta^p}
					\binf{\Gamma\Ra \vphi\land\psi,\Delta}
				\DisplayProof 
			\]
			then $\tau_{\mc{D}}(\vphi\land\psi)=\maxf(\tau_\mc{D}(\vphi),\tau_\mc{D}(\psi))$, and 			\begin{align*}
				&\tau_{\mc{D}}(\gamma_i)=\maxf(\tau_{\mc{D}}(\gamma_i^{j_i}),\tau_{\mc{D}}(\gamma_i^{l_i})),&&1\leq i\leq n;\\
				& \tau_{\mc{D}}(\delta_i)=\maxf(\tau_{\mc{D}}(\delta_i^{k_i}),\tau_{\mc{D}}(\delta_i^{p_i})),&&1\leq i\leq m.
			\end{align*}
		\item  If $\mc{D}$ ends with
		\[
			\ax{\Gamma,\forall x\vphi^k,\vphi(t)\Ra\Delta}
				\uinf{\forall x \vphi^l,\Gamma\Ra\Delta}
			\DisplayProof
		\]
			 then $\tau_\mc{D}(\forall x \vphi^l)=\maxf(\tau_\mc{D}(\forall x\vphi^k),\tau_\mc{D}(\vphi(t)))$ and the
$\T$-complexity of the formulas in $\Gamma,\Delta$ is unchanged. 
	\item In an application of $\text{\sc (cut)}$, the $\T$-complexity of the formulas in the conclusion of the rule is treated as in case \ref{case:tccr} above.  
	\end{enumeratei}
	
	Finally, the \emph{$\tau$-complexity} of an $\lpc$-proof $\mc{D}$ is the maximum of the $\T$-complexities for the formulas occurring in it.
\end{dfn}

In what follows, it will be convenient to keep track of all derivation measure in a more compact notation. 

\begin{notation}
	We write:
		\begin{itemize-}
			\item $\lpc\sststile{m,k}{n}\Gamma\Ra \Delta$ for `the sequent $\Gamma \Ra \Delta$ has a proof in $\lpc$ with length $\leq n$, cut-rank $\leq m$, and $\T$-complexity $\leq k$'.
			\item $\lpc\sststile{}{n}\Gamma \Ra \Delta$ for  `there are $m,k$ such that $\lpc\sststile{m,k}{n}\Gamma\Ra \Delta$', and $\lpc\sststile{}{}\Gamma \Ra \Delta$ for `there is $n$ such that $\lpc\sststile{}{n}\Gamma\Ra \Delta$'.
			\item We will omit, when it's clear from the context, reference to the background system and write $\sststile{m,k}{n}$ instead of $\lpc\sststile{m,k}{n}$.
			\item We will occasionally also need to refer to the truth complexity of a single formula in a sequent as well. We will keep reference to the proof implicit, and write $\presup{k}\vphi$ for `the occurrence of $\vphi$ has truth complexity $k$ in the given derivation'. 
		\end{itemize-}
\end{notation}

The next lemma states the monotonicity of some of our measures (length and $\T$-complexity), some basic properties of $\bot$ and $\top$ in derivations, and the fully structural nature of $\lpc$ when formulas of the base language are at stake. Their proofs follow almost immediately from the definition of $\lpc \sststile{m,k}{n}$ (monotonicity), or by straightforward inductions on the length of the proof in $\lpc$.  
\begin{lemma}\hfill\label{lem:topbot}
		\begin{enumeratei}
			\item If $\lpc\sststile{m,k}{n}\Gamma \Ra \Delta$, and $k\leq k_0$ and $n\leq n_0$, then $\lpc\sststile{m,k_0}{n_0}\Gamma \Ra \Delta$. 
			\item If $\lpc\sststile{m,k}{n}\top,\Gamma \Ra \Delta$, then $\lpc\sststile{m,k}{n}\Gamma\Ra \Delta$ and the $\T$-complexity of the formulas in the contexts is unchanged. 
			\item If $\lpc\sststile{m,k}{n}\Gamma \Ra \Delta,\bot$, then $\lpc\sststile{m,k}{n}\Gamma\Ra \Delta$ and the $\T$-complexity of the formulas in the contexts is unchanged.  
			\item $\lpc \sststile{}{}\Gamma,\vphi\Ra\vphi,\Delta$ for all $\vphi\in \mc{L}$. 
		\end{enumeratei}
	\end{lemma}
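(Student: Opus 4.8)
The statement bundles four facts. Item (i) is purely definitional; items (ii)--(iii) I would prove by induction on the length of the given derivation, and (iv) by induction on logical complexity. For (i): by definition $\lpc\sststile{m,k}{n}\Gamma\Ra\Delta$ asserts the existence of an $\lpc$-derivation of $\Gamma\Ra\Delta$ with length $\leq n$, cut-rank $\leq m$ and $\T$-complexity $\leq k$; since each of these clauses is upward closed in its bound, that same derivation witnesses $\lpc\sststile{m,k_0}{n_0}\Gamma\Ra\Delta$ as soon as $k\leq k_0$ and $n\leq n_0$. Nothing has to be rebuilt.

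For (ii) I would induct on the length $n$ of a derivation of $\top,\Gamma\Ra\Delta$. The observation that makes it run is that $\top$ is never the principal formula of a left rule -- there is no left rule for $\top$, and the axiom {\sc ($\top$)} carries $\top$ on the right -- so the distinguished antecedent occurrence of $\top$ is always a side formula of an initial sequent or a context formula of the last inference. Base case: in {\sc (ref$^-$)} the active atomic $\mc{L}$-formula is distinct from $\top$, and in {\sc ($\top$)}, {\sc ($\bot$)} the relevant witness ($\top$ on the right, resp.\ $\bot$, which differs from $\top$, on the left) survives the deletion, so $\Gamma\Ra\Delta$ is again an initial sequent of the same kind. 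Inductive step: apply the induction hypothesis to the premiss(es) of the last rule -- strictly shorter, same cut-rank, unchanged $\T$-complexities of the surviving formulas -- and reapply the rule; the length stays $\leq n$ and the cut-rank $\leq m$, the eigenvariable condition of {\sc ($\forall$r)} cannot fail because the antecedent only shrinks, and no surviving formula changes its $\T$-complexity, since by Definition \ref{dfn:tcompl} every rule -- cut included -- fixes the $\T$-complexity of a conclusion formula from the $\T$-complexities of its matching ancestors in the premisses: the deleted $\top$ is nobody's ancestor, and $\top\in\mc{L}$ contributes $0$. Item (iii) is the exact mirror image on the succedent, with $\bot$ replacing $\top$ and using that $\bot$ is never principal on the right.

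For (iv) I would induct on $\lth{\vphi}$ for $\vphi\in\mc{L}$. If $\vphi$ is atomic, {\sc (ref$^-$)} gives $\Gamma,\vphi\Ra\vphi,\Delta$ at once, and if $\vphi$ is $\bot$ or $\top$ the sequent is directly an instance of {\sc ($\bot$)}, resp.\ {\sc ($\top$)}. If $\vphi\equiv\neg\psi$, apply {\sc ($\neg$r)} below {\sc ($\neg$l)} to the induction hypothesis $\Gamma,\psi\Ra\psi,\Delta$. If $\vphi\equiv\psi\land\chi$, apply {\sc ($\land$l)} below an {\sc ($\land$r)} whose two premisses $\Gamma,\psi,\chi\Ra\psi,\Delta$ and $\Gamma,\psi,\chi\Ra\chi,\Delta$ come from the induction hypothesis. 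If $\vphi\equiv\forall x\psi$, pick $y\notin{\rm FV}(\Gamma,\Delta,\forall x\psi)$ and form
\[
\ax{\Gamma,\forall x\psi,\psi(y/x)\Ra\psi(y/x),\Delta}
\uinf{\Gamma,\forall x\psi\Ra\psi(y/x),\Delta}
\uinf{\Gamma,\forall x\psi\Ra\forall x\psi,\Delta}
\DisplayProof
\]
where the top sequent is the induction hypothesis on $\psi(y/x)$ (an $\mc{L}$-formula of logical complexity $\lth{\psi}<\lth{\forall x\psi}$), the first step is {\sc ($\forall$l)} with instance $y$, and the last is {\sc ($\forall$r)}, whose eigenvariable condition holds by the choice of $y$ -- the point being that the instance $\psi(y/x)$ lives only in the premiss of the {\sc ($\forall$l)} step and hence is absent from the antecedent of the {\sc ($\forall$r)} inference below it. As every formula occurring in these derivations is an $\mc{L}$-formula, no cut appears and the $\T$-complexity is constantly $0$, which yields $\lpc\sststile{}{}\Gamma,\vphi\Ra\vphi,\Delta$.

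As the remarks preceding the lemma anticipate, none of this is hard. The two spots that deserve a moment's care are the $\T$-complexity bookkeeping in the inductive steps of (ii) and (iii) -- one must check that erasing a context occurrence of $\top$ or $\bot$ leaves every other formula's $\T$-complexity intact, which is exactly what the ancestor-wise definition of $\T$-complexity (Definition \ref{dfn:tcompl}, read through the occurrence convention after \eqref{eq:occnot}) guarantees -- and the ordering of the two quantifier inferences in (iv), which must be {\sc ($\forall$r)} below {\sc ($\forall$l)} lest the freshly introduced instance spoil the eigenvariable condition.
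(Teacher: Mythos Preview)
Your proposal is correct and follows the same line the paper gestures at: (i) is read off the definition of $\sststile{m,k}{n}$, (ii)--(iii) are proved by induction on the length of the derivation using that $\top$ (resp.\ $\bot$) is never principal on the left (resp.\ right), and (iv) by induction on $\lth{\vphi}$. The paper leaves all of this as a one-line remark, so your careful bookkeeping of $\T$-complexities and the eigenvariable check in the $\forall$-case simply spell out what the paper takes for granted.
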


The usual substitution and weakening lemmata hold for $\lpc$. Crucially for our purposes, they do not entail any increase in the $\T$-complexity of the derivation. In the case of weakening, this essentially relies on the fact that, by Definition \ref{dfn:tcompl}\ref{case:weatc}, side formulas in initial sequents have minimal $\T$-complexity. 
\begin{lemma}[Substitution, Weakening]\label{lem:invlog}
		\hfill
		\begin{enumeratei}
			\item If $\lpc\sststile{m,k}{n} \Gamma \Ra\Delta$, then $\lpc\sststile{m,k}{n}\Gamma(t/x)\Ra\Delta(t/x)$, where $t$ does not contain variables employed in applications of \textup{({\sc $\forall$r})} in the proof of $\Gamma\Ra \Delta$. The $\T$-complexity of all formulas in $\Gamma,\Delta$ is unchanged by the substitution. 
			\item If $\lpc \sststile{m,k}{n}\Gamma\Ra \Delta$, then $\lpc \sststile{m,k}{n}\Gamma,\Theta\Ra \Delta,\Lambda$ such that formulas in $\Theta, \Lambda$ have minimal complexity. Moreover, the $\T$-complexity of each formula in $\Gamma,\Delta$ is unchanged. 
		\end{enumeratei}
\end{lemma}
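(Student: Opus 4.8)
The plan is to prove both clauses by a routine induction on the length $n$ of the given $\lpc$-derivation, with a case distinction on the last inference, exactly as one does for any ${\bf G3}$ system. I would establish clause (i) first, since it is needed to handle eigenvariable clashes in clause (ii). Throughout, the only nonstandard point is the bookkeeping of $\T$-complexity, and I would carry, alongside the usual inductive invariant, the statement that the transformed derivation has the same length, the same cut-rank, and the same $\T$-complexity values on the formulas of $\Gamma,\Delta$.

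For the substitution claim (i): the base cases are immediate, since $(t/x)$ sends an instance of {\sc (ref$^-$)}, {\sc ($\top$)} or {\sc ($\bot$)} to another instance of the same initial sequent (substitution preserves atomicity and membership in $\mc{L}$, and fixes $\bot,\top$), and by Definition \ref{dfn:tcompl}\ref{case:weatc} every formula there has $\T$-complexity $0$ before and after. For the logical rules one applies the induction hypothesis to the premiss(es) and reinstates the rule; the clause of Definition \ref{dfn:tcompl} attached to that rule computes the $\T$-complexity of the principal formula from those of the active formulas in the premiss(es), leaving the side formulas' values fixed, so by the induction hypothesis all the relevant values are preserved. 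The one case needing care is {\sc ($\forall$r)}: if $y$ is its eigenvariable, the hypothesis that $t$ contains no variable employed in a {\sc ($\forall$r)}-application gives $y\notin{\rm FV}(t)$, and since also $y\notin{\rm FV}(\Gamma,\Delta,\forall x\,\vphi)$, applying $(t/x)$ to the premiss creates no free occurrence of $y$, so the eigenvariable condition survives; {\sc ($\forall$l)} is handled by substituting $t$ into the witnessing term as well, after the usual renaming of the bound variable away from $t$. Finally, {\sc ($\T$l)} and {\sc ($\T$r)} are literally untouched by $(t/x)$: since $\corn{\vphi}$ names a \emph{sentence}, both the active formula $\vphi$ and the principal formula $\T\corn{\vphi}$ are closed, so the substitution acts trivially on them and the clause $\tau_{\mc{D}}(\T\corn{\vphi})=\tau_{\mc{D}}(\vphi)+1$ is trivially maintained. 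The case of {\sc cut} goes as {\sc ($\land$r)}.

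For the weakening claim (ii), I would again induct on $n$. The decisive base case is an initial sequent: adjoining the multisets $\Theta,\Lambda$ to its two sides yields again an initial sequent, and by Definition \ref{dfn:tcompl}\ref{case:weatc} \emph{every} formula occurring in it — in particular every formula of $\Theta,\Lambda$ — receives $\T$-complexity $0$, which is minimal, while the formulas already present keep the value $0$. In the inductive step one simply carries $\Theta,\Lambda$ along as extra side formulas through the last inference and appeals to the induction hypothesis: at each inference the $\T$-complexity of a side formula is left unchanged (and maxima of zeros are zero), so the formulas of $\Theta,\Lambda$ stay at $\T$-complexity $0$ all the way down, while the formulas of $\Gamma,\Delta$ keep their original values; length and cut-rank are obviously unaffected. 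The delicate point is once more {\sc ($\forall$r)}: if its eigenvariable $y$ happens to occur free in $\Theta$ or $\Lambda$, I would first use clause (i) to rename $y$ throughout the subderivation to a fresh variable — legitimate because an eigenvariable occurs in none of the terms used in {\sc ($\forall$r)}-applications, and harmless because (i) preserves length, cut-rank and $\T$-complexity — and only then perform the weakening.

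I do not expect a serious obstacle; the statement is essentially a standard structural lemma, and the only thing requiring genuine (if light) care is the $\T$-complexity bookkeeping. That goes through effortlessly for two reasons: the truth rules operate on \emph{sentences}, so substitution never interacts with a truth inference and never changes the number of truth rules applied to an ancestor of a formula; and Definition \ref{dfn:tcompl}\ref{case:weatc} makes any formula introduced by weakening into a leaf start — and hence remain — at $\T$-complexity $0$. The side hypothesis on $t$ in (i) and the fresh-renaming move in (ii) are exactly what is needed to keep the eigenvariable conditions intact, and these are the only fiddly details.
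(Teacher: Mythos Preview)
Your proposal is correct and matches the paper's approach: the paper does not spell out a proof but explicitly flags the argument as a straightforward induction on the length of the derivation, noting that the weakening case hinges on Definition~\ref{dfn:tcompl}\ref{case:weatc} (side formulas in initial sequents have minimal $\T$-complexity), which is exactly the key point you isolate. Your additional observations --- that substitution interacts trivially with the truth rules because $\corn{\vphi}$ names a sentence, and that eigenvariable clashes in weakening are resolved by first invoking clause (i) --- are the standard fine print and are entirely in line with what the paper leaves implicit.
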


The next lemma contains the key property that differentiates $\lpc$ from other nonclassical and substructural approaches (cf remark \ref{rem:invert} below). Crucially, it states that truth rules are invertible in a way that does not increase neither the length nor the $\T$-complexity of the derivation. In particular, when truth ascriptions have non-zero $\T$-complexity, inversion actually \emph{reduces} their truth complexity. This property is essential for establishing the admissibility of contraction in $\lpc$ and therefore cut-elimination.

\begin{lemma}[Invertibility of $\lpc$-rules]\label{lem:invlog}
	\hfill
	\begin{enumeratei}
		\item\label{invuno} If $\lpc \sststile{m,k}{n}\Gamma,\T\corn{\vphi}\Ra \Delta$, then $\lpc\sststile{m,k}{n} \Gamma,\vphi\Ra \Delta$, with
			\begin{align*}
				&\tau(\vphi)\leq \tau(\T\corn{\vphi}),\text{ if $\tau(\T\corn{\vphi})=0$},\\
				&\tau(\vphi)<\tau(\T\corn{\vphi}),\text{ if $\tau(\T\corn{\vphi})>0$},
			\end{align*}
			and in which the $\T$-complexity in the side formulas does not increase. 
			
			A symmetric claim holds when $\lpc \sststile{m,k}{n}\Gamma\Ra\T\corn{\vphi}, \Delta$.
		\item\label{invtre} If $\lpc \sststile{m,k}{n} \Gamma,\neg\vphi \Ra\Delta$, then $\lpc \sststile{m,k}{n} \Gamma\Ra\vphi,\Delta$ with $\tau(\vphi)\leq\tau(\neg\vphi)$ and in which the $\tau$-complexity of the side formulas does not increase. 
		
		A symmetric claim holds when $\lpc \sststile{m,k}{n} \Gamma \Ra, \neg\vphi,\Delta$.
		%
	\item \label{invcinque} If $\lpc \sststile{m,k}{n} \Gamma,\vphi\land\psi\Ra\Delta$, then $\lpc \sststile{m,k}{n}\Gamma,\vphi,\psi\Ra \Delta$ with $\tau(\vphi),\tau(\psi)\leq \tau(\vphi\land\psi)$ and in which the $\tau$-complexity of the side formulas does not increase.
		%
	\item \label{invsei} If $\lpc \sststile{m,k}{n} \Gamma\Ra\vphi\land\psi,\Delta$, then $\lpc \sststile{m,k}{n}\Gamma\Ra \Delta,\vphi$ and $\lpc \sststile{m,k}{n}\Gamma\Ra \Delta,\psi$ with $\tau(\vphi),\tau(\psi)\leq \tau(\vphi\land\psi)$ and in which the complexity of the side formulas is no greater than their $\tau$-maximal occurrence in the premisses.
		%
	\item \label{invsette} If $\lpc \sststile{m,k}{n} \Gamma\Ra\Delta,\forall x\vphi$, then $\lpc \sststile{m,k}{n} \Gamma\Ra\Delta,\vphi(y)$, for  any $y$ not free in $\Gamma,\Delta,\forall x\vphi$, with $\tau(\vphi(y))\leq\tau(\forall x\vphi)$ and in which the complexity of the side formulas does not increase.
	\end{enumeratei}
\end{lemma}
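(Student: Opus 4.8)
The plan is to prove each of the five items by induction on the length $n$ of the given $\lpc$-derivation $\mc{D}$, in each case distinguishing three situations according to the last inference of $\mc{D}$: (i) $\mc{D}$ is an initial sequent; (ii) the last rule has the formula to be inverted --- say $\T\corn{\vphi}$, $\neg\vphi$, $\vphi\land\psi$, or $\forall x\vphi$ --- as its principal formula; (iii) the last rule has that formula merely among its side formulas. The five items are mutually independent, in the sense that inverting one connective never requires inverting another, so the induction for each can be run on its own, appealing only to the induction hypothesis for the same item at smaller length; the only external input is the Substitution lemma, used to rename eigenvariables where needed. The symmetric claims in \ref{invuno} and \ref{invtre} then follow by the left--right duality of the rules.

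Case (i) is immediate: the formula to be inverted cannot be principal in an initial sequent, since the principal formulas of {\sc (ref$^-$)}, {\sc ($\top$)}, {\sc ($\bot$)} are atomic $\mc{L}$-formulas, $\top$, or $\bot$; hence it occurs as a side formula, and replacing it (by $\vphi$, by $\vphi$, by the pair $\vphi,\psi$, or by $\vphi(y)$ respectively) yields again an initial sequent of the same kind and the same length. All the $\T$-complexity side conditions hold trivially, because by Definition \ref{dfn:tcompl}\ref{case:weatc} every formula occurring in an initial-sequent derivation has $\T$-complexity $0$. Case (ii) is read off directly from the relevant rule together with the matching clause of Definition \ref{dfn:tcompl}: if, e.g., $\mc{D}$ ends with ($\T${\sc l}) on $\T\corn{\vphi}$, then its premiss is already the sequent $\Gamma,\vphi\Ra\Delta$ wanted, with length $\leq n-1$, the same cut-rank, unchanged $\T$-complexities for the side formulas, and $\tau(\vphi)=\tau(\T\corn{\vphi})-1<\tau(\T\corn{\vphi})$ by the ($\T$)-clause of Definition \ref{dfn:tcompl} (here $\tau(\T\corn{\vphi})\geq 1$, so we are in the strict case); the cases for ($\neg${\sc l}), ($\neg${\sc r}), ($\land${\sc l}), ($\land${\sc r}), ($\forall${\sc r}) are entirely analogous, using that the corresponding clauses of Definition \ref{dfn:tcompl} assign to the principal formula precisely (the $\maxf$ of) the complexity of its active formula(s).

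Case (iii) is the heart of the argument: apply the induction hypothesis to the premiss(es) of the last rule and then reapply that rule. The length does not grow, since the premiss(es) are obtained with length $\leq n-1$ and one further inference brings us back to $\leq n$; the cut-rank does not grow, since no new cut is introduced and a reapplied {\sc cut} keeps its cut formula, hence its rank; and the eigenvariable conditions remain satisfiable because the formula produced by the inversion is a subformula of a formula already present in the conclusion, so it introduces no eigenvariable of $\mc{D}$ (after renaming via the Substitution lemma if necessary). The one delicate point is the preservation of the $\T$-complexity inequalities through the combining clauses of Definition \ref{dfn:tcompl}: for one-premiss rules the side-formula complexities are literally preserved, so the induction hypothesis bound carries over verbatim; for two-premiss rules --- ($\land${\sc r}), the merging in ($\forall${\sc l}), and {\sc cut} (case \ref{case:tccr}) --- the complexity of each formula in the conclusion is the $\maxf$ of its complexities in the two premisses, and since $\maxf$ is monotone the $\leq$-bounds survive.

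The genuinely non-routine part is checking that the \emph{strict} bound of item \ref{invuno} also survives this merging, i.e.\ that $\maxf_i\tau'_i<\maxf_i\tau_i$ whenever $\tau'_i\leq\tau_i$ for all $i$ and $\tau'_i<\tau_i$ whenever $\tau_i>0$. This holds because, if $\maxf_i\tau_i>0$, that maximum is attained at some premiss where $\tau_{i_0}>0$, so the induction hypothesis forces $\tau'_{i_0}<\tau_{i_0}$, i.e.\ $\tau'_{i_0}\leq\tau_{i_0}-1$; and every other premiss has $\tau'_j\leq\tau_j\leq\tau_{i_0}$, with the strict inequality $\tau'_j<\tau_j=\tau_{i_0}$ whenever $\tau_j=\tau_{i_0}$ (as then $\tau_j>0$ too), so in all cases $\tau'_j\leq\tau_{i_0}-1$ and hence the new maximum is strictly below $\tau_{i_0}$. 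This ordinal bookkeeping --- together with the parallel, easier verification of the ``does not increase'' / ``$\tau$-maximal occurrence in the premisses'' clauses for side formulas --- is the main obstacle; everything else is the standard length-preserving {\bf G3}-style inversion argument.
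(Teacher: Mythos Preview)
Your argument is correct and follows the same induction-on-length strategy as the paper, with the same three-case split (initial sequent / principal / side formula). You are in fact more explicit than the paper about the preservation of the strict bound in \ref{invuno} under two-premiss rules via the $\max$ argument; the paper only exhibits the one-premiss case ({\sc $\T$r}) and declares the remaining cases ``similarly obtained by induction hypothesis.''
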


\begin{proof}
	We show \ref{invuno} by induction on $n$. The other cases are easier.
	
	If $ \sststile{}{0}\Gamma,\T\corn{\vphi}\Ra \Delta$ -- i.e.~$\Gamma,\T\corn{\vphi}\Ra\Delta$ is an axiom --, then $\tau(\T\corn{\vphi})=0$. Therefore, also $ \sststile{}{0}\Gamma,\vphi\Ra\Delta$ and $\tau(\vphi)\leq \tau(\T\corn{\vphi})$. 
	
	If $\sststile{m,k}{n}\Gamma,\T\corn{\vphi}\Ra \Delta$ with $n>0$, then $\T\corn{\vphi}$ might be principal or not in the last inference.  If it's principal, we have 
	\begin{align*}
		 &\lpc \sststile{m,k_0}{n_0} \Gamma ,\presup{p_0}{\vphi}\Ra \Delta&&\text{$n_0<n,p_0<k,k_0\leq k$.}
	\end{align*}
	(recall that $\presup{p_0}{\vphi}$ signifies: $\tau(\vphi)=p_0$). The claim is then obtained by monotonicity (Lemma \ref{lem:topbot}(i)).
	 
	 If $\T\corn{\vphi}$ is not principal, let's suppose -- to consider one of the crucial cases -- that the last inference is an application of ({\sc $\T$r}). We then have:
	\begin{align*}
		&\sststile{m_0,k_0}{n_0}\Gamma,\presup{p_0}\T\corn{\vphi}\Ra\Delta_0,\presup{p_1}\psi, &&\Delta=\Delta_0,\T\corn{\psi}, n_0<n,\\
		&&&m_0=m,k_0\leq k,p_0\leq k,p_1<k.
	\end{align*}
	By the induction hypothesis, $ \sststile{m_0,k_0}{n_0} \Gamma,\presup{p_2}\vphi\Ra \Delta_0,\presup{p_1}\psi$, with $p_2\leq p_0$, and therefore, by {\sc ($\T$r)}, 
	\begin{align*}
		& \sststile{m,k}{n} \Gamma,\presup{p_2}\vphi\Ra \Delta_0,\presup{p_3}\T\corn{\psi}&& p_3\leq k. 
	\end{align*}
	The remaining cases for this subcase are similarly obtained by induction hypothesis.
	
	
\end{proof}

\begin{remark}\label{rem:invert}
It is important to observe that, in the presence of initial sequents admitting arbitrary atomic formulas of $\lt$, the inversion strategy considered above will not go through. For instance, the derivability of a sequent of the form $\Gamma,\T\corn{\vphi}\Ra\T\corn{\vphi},\Delta$ does not guarantee, for instance, the derivability of a sequent $\Gamma,\T\corn{\vphi}\Ra\vphi,\Delta$ with $\tau(\vphi)\leq \tau(\T\corn{\vphi})$. 
\end{remark}

The absence of explicit contraction -- either as a rule or by the assumption of finite sets in sequents -- is especially welcome when reasoning with measures such as the $\T$-complexity, because it may prove to be difficult to track the $\T$-complexity of each formula in a derivation if it is explicitly allowed to merge with the $\T$-complexity other occurrences of the same formula in proofs. However, as it is shown in the next lemma, contraction is an admissible rule in $\lpc$. 

\begin{lemma}[$\tau$-admissibility of contraction]\label{lem:admcon}
	\hfill
		\begin{enumeratei}
			\item \label{lem:contuno} If $\lpc\sststile{m,k}{n}\Gamma,\vphi^{p_0},\vphi^{p_1}\Ra\Delta$, then $\lpc\sststile{m,k}{n}\Gamma,\vphi\Ra\Delta$ with  $\tau(\vphi)\leq \maxf(\tau(\vphi^{p_0}),\tau(\vphi^{p_1}))$ and in which the complexity of the side formulas does not increase.
			\item \label{lem:contdue} If $\lpc\sststile{m,k}{n}\Gamma\Ra\vphi^{p_0},\vphi^{p_1},\Delta$, then $\lpc\sststile{m,k}{n}\Gamma\Ra\vphi,\Delta$ with  $\tau(\vphi)\leq \maxf(\tau(\vphi^{p_0}),\tau(\vphi^{p_1}))$ and in which the complexity of the side formulas does not increase.
		\end{enumeratei}
\end{lemma}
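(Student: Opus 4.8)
The plan is to prove both clauses simultaneously by induction on the length $n$ of the derivation, using the invertibility lemma (Lemma~\ref{lem:invlog}) as the engine for the principal cases, much as in the standard \textbf{G3} proof of admissibility of contraction, but paying attention throughout to the $\T$-complexity bookkeeping. I will treat clause~\ref{lem:contuno}; clause~\ref{lem:contdue} is symmetric. So suppose $\lpc\sststile{m,k}{n}\Gamma,\vphi^{p_0},\vphi^{p_1}\Ra\Delta$, and consider the last rule applied.

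The base case is when the premise is an initial sequent. If $\Gamma,\vphi^{p_0},\vphi^{p_1}\Ra\Delta$ is an instance of $(\top)$, $(\bot)$, or $(\mathrm{ref}^-)$, then deleting one copy of $\vphi$ still leaves an initial sequent of the same kind (note $\vphi$ must be atomic and $\T$-free if it is the principal formula of $(\mathrm{ref}^-)$, and in all these cases every formula has $\T$-complexity $0$ by Definition~\ref{dfn:tcompl}\ref{case:weatc}), so the conclusion holds with $\tau(\vphi)=0\leq\maxf(p_0,p_1)$ and nothing increases. For the inductive step, the easy case is when neither contracted occurrence is principal in the last inference: apply the induction hypothesis to the premise(s) — using the $\T$-complexity clauses of Definition~\ref{dfn:tcompl} to see that the pre-contraction $\T$-complexities of the two occurrences upstairs are bounded by $p_0,p_1$ — then reapply the rule. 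One subtlety is two-premise rules ($\land\mathrm{r}$, $\mathrm{cut}$): here each premise carries its own pair of occurrences $\vphi^{j},\vphi^{l}$, contraction is applied to each premise by the IH, and the resulting $\T$-complexity of the single $\vphi$ downstairs is $\maxf$ of the two, which by case~\ref{case:tccr} is exactly $\maxf(\tau(\vphi^{p_0}),\tau(\vphi^{p_1}))$ as required; the side-formula condition is handled the same way. For $(\forall\mathrm{l})$ one must also invoke the substitution lemma (Lemma~\ref{lem:invlog}(i)) if a fresh term is involved, but that does not affect $\T$-complexities.

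The real work is the principal case, where one of the contracted occurrences — say $\vphi^{p_1}$ — is the principal formula of the last rule $R$, so $\vphi$ has the shape matching $R$. The strategy: first use the induction hypothesis on the premise to contract the \emph{other} occurrence $\vphi^{p_0}$ with the active formula(s) of $R$ that it duplicates — but to make $\vphi^{p_0}$ match the active formulas one first has to \emph{invert} $R$ on $\vphi^{p_0}$ using Lemma~\ref{lem:invlog}. Concretely, if $R$ is $(\T\mathrm{l})$ with $\vphi=\T\corn{\chi}$, the premise is $\Gamma,\vphi^{p_0},\chi\Ra\Delta$ with $\tau(\chi)=p_1-1$ (or $p_1=0$); invert $(\T\mathrm{l})$ on the occurrence $\vphi^{p_0}=\T\corn{\chi}$ to get $\Gamma,\chi',\chi\Ra\Delta$ where $\tau(\chi')\leq p_0$ and strictly $<p_0$ when $p_0>0$; apply the IH to contract $\chi',\chi$ into a single $\chi$ with $\tau\leq\maxf(\tau(\chi'),p_1-1)\leq\maxf(p_0,p_1)-1$ (the $-1$ surviving whenever $\maxf(p_0,p_1)>0$); finally reapply $(\T\mathrm{l})$ to recover $\Gamma,\T\corn{\chi}\Ra\Delta$ with $\tau(\T\corn{\chi})\leq\maxf(p_0,p_1)$. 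The cases $R\in\{\neg\mathrm{l},\neg\mathrm{r},\land\mathrm{l},\land\mathrm{r},\forall\mathrm{l},\forall\mathrm{r},\T\mathrm{r}\}$ go through the same pattern, using the corresponding clause of Lemma~\ref{lem:invlog} and, for $\land$ and $\forall$, possibly two nested applications of the IH (e.g.\ for $\land\mathrm{l}$ with $\vphi=\chi_1\land\chi_2$ one inverts, obtaining $\Gamma,\chi_1',\chi_2',\chi_1,\chi_2\Ra\Delta$, then contracts $\chi_1',\chi_1$ and $\chi_2',\chi_2$ separately).

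The main obstacle I expect is precisely the $\T$-complexity accounting in this principal case: one must verify that inverting then re-contracting then re-applying the rule lands at $\tau(\vphi)\leq\maxf(p_0,p_1)$ and not something larger, and this hinges on the \emph{strict} decrease in Lemma~\ref{lem:invlog}\ref{invuno} when the $\T$-complexity is positive — it is what lets the rebuilt application of $(\T\mathrm{l})/(\T\mathrm{r})$ add its $+1$ back without overshooting. A secondary point of care is that all these manipulations (inversion, IH, rule reapplication) preserve the bound $\leq m$ on cut-rank and $\leq n$ on length; length preservation is the usual observation that inversion and contraction do not increase $n$ (Lemma~\ref{lem:invlog}, and the IH here), and reapplying the single rule $R$ at the end does not increase length over the original derivation since we removed the original application of $R$ when we passed to its premise. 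Finally one should double-check the clause about side formulas not increasing in $\T$-complexity: this propagates automatically from the side-formula clauses of Lemmata~\ref{lem:invlog} and the IH, since at no stage is any context formula duplicated or made principal.
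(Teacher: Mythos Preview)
Your proof is essentially the paper's own argument: simultaneous induction on $n$, trivial base case (all $\tau$-complexities are $0$ in initial sequents), induction hypothesis on the premises in the non-principal case, and for the principal case invert the non-principal duplicate via Lemma~\ref{lem:invlog}, contract by IH, and reapply the rule---with the strict decrease in Lemma~\ref{lem:invlog}\ref{invuno} doing exactly the work you identify. The paper singles out the same $(\T\textsc{l})$ case and makes the same observation about why strong invertibility is indispensable.

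One small slip: you list $(\forall\textsc{l})$ among the principal cases handled ``through the same pattern, using the corresponding clause of Lemma~\ref{lem:invlog}'', but there is no inversion clause for $(\forall\textsc{l})$. Fortunately this case is \emph{easier}, not harder: because the principal formula $\forall x\psi$ already reappears in the premise of $(\forall\textsc{l})$, the premise has the form $\Gamma,(\forall x\psi)^{p_0},(\forall x\psi)^{k},\psi(t)\Ra\Delta$ with $p_1=\maxf(k,\tau(\psi(t)))$, so you can apply the IH directly to contract the two copies of $\forall x\psi$ and then reapply $(\forall\textsc{l})$---no inversion needed, and the $\tau$-bound $\maxf(p_0,p_1)$ drops out of the definition of $\tau$ for $(\forall\textsc{l})$. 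The paper notes exactly this. (Relatedly, your aside about invoking the substitution lemma for $(\forall\textsc{l})$ in the non-principal case is not needed; and the parenthetical ``or $p_1=0$'' in your $(\T\textsc{l})$ analysis cannot arise, since a principal $\T\corn{\chi}$ always has $\tau\geq 1$.)
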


\begin{proof}
	\ref{lem:contuno} and \ref{lem:contdue} are proved \emph{simultaneously} by induction on $n$. Let us focus on  \ref{lem:contuno}.
	
	If $\;\sststile{}{0}\Gamma,\vphi^{p_0},\vphi^{p_1}\Ra\Delta$, then in each case $\tau(\vphi^{p_0})=\tau(\vphi^{p_1})=0$ and we have $\;\sststile{}{0}\Gamma,\vphi\Ra\Delta$ in which all formulas have $\T$-complexity $0$.  
	%
		%
		If $\sststile{}{l+1}\Gamma,\vphi^{p_0},\vphi^{p_1}\Ra\Delta$ and neither $\vphi^{p_0}$ nor $\vphi^{p_1}$ are principal in the last inference, then $\sststile{}{l+1}\Gamma,\vphi\Ra\Delta$ -- with the expected $\T$-complexities -- by induction hypothesis and possibly monotonicity.
		
		It remains the case in which $\sststile{}{l+1}\Gamma,\vphi^{p_0},\vphi^{p_1}\Ra\Delta$   and one of $\vphi^{p_0}$ or $\vphi^{p_1}$ is principal in the last inference. As an example, I treat the crucial case in which $\vphi$ is $\T\corn{\psi}$. By assumption, 
		\[
			\sststile{}{l} \Gamma,\psi^{p_{00}},\T\corn{\psi}^{p_1}\Ra\Delta
		\]
		with $\tau(\psi^{p_{00}})< \tau(\T\corn{\psi}^{p_0})\leq k$.\footnote{Of course, strictly speaking, inversion may not provide a copy of the proof in which the structure of the occurrences given by the superscripts is preserved. However, since the only relevant detail here is to distinguish between the two occurrences `to be contracted', we keep the same index for the same formulas before and after the application of inversion. }   By inversion, we have that 
		\[
			\sststile{}{l} \Gamma,\psi^{p_{00}},{\psi}^{p_{10}}\Ra\Delta.
		\]
		It can then be that $\tau({\psi}^{p_{10}})=\tau(\T\corn{\psi}^{p_1})=0$, or $\tau({\psi}^{p_{10}})<\tau(\T\corn{\psi}^{p_1})$. In both cases, we obtain 
		\[
			\sststile{}{l+1}\Gamma,\T\corn{\psi}\Ra \Delta
		\]
		with $\tau(\T\corn{\psi})\leq \maxf(\tau(\T\corn{\psi}^{p_0}),\tau(\T\corn{\psi}^{p_1}))$. It is crucial to observe that without the strong invertibility property expressed by lemma \ref{lem:invlog}(i) -- which in turn relies on the restriction of initial sequents --, one would  not be able to establish this case. In particular, if $\tau(\T\corn{\psi}^{p_1})=k> \tau(\T\corn{\psi}^{p_0})$, without the special invertibility property of Lemma \ref{lem:invlog}(i) one would not be able to complete the proof.

It is also worth noticing that the formulation of {\sc ($\forall$l)} and its associated $\T$-complexity renders the case of \ref{lem:contuno} in which one of the $\vphi$'s  is principal in the last inference and of the form $\forall x \vphi$ straightforward. Also, the simultaneous induction is especially required in the case in which the last inference is an application of ({\sc $\neg$l}) to $\vphi^{p_0}$ or $\vphi^{p_1}$ -- and symmetrically  for  \ref{lem:contdue} and ({\sc $\neg$r}).
		
\end{proof}

	The reduction lemma can now be proved in a fairly standard way. We let $(\alpha_1,\ldots,\alpha_m)\prec(\beta_1,\ldots,\beta_n)$ if $\alpha_i<\beta_i$ $(i=1,\ldots,n)$,  and for all $j< i$, $\alpha_j=\beta_j$.

	\begin{lemma}[Reduction]\label{lem:redlpt}
		If $\lpc\sststile{m,k}{n_0}\Gamma\Ra \Delta, \vphi^{l_0}$ and $\lpc\sststile{m,k}{n_1}\vphi^{l_1},\Gamma\Ra \Delta$, then $\lpc\sststile{m,k}{n_0+n_1}\Gamma\Ra \Delta$. In this latter sequent, the occurrences of formulas have $\T$-complexity no greater than the maximum of their corresponding occurrences in the assumptions of the claim.
	\end{lemma}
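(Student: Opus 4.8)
The plan is to argue by induction on the triple $\langle\maxf(l_0,l_1),\lth{\vphi},n_0+n_1\rangle$, ordered lexicographically by the relation $\prec$ just introduced: the $\T$-complexity of the two occurrences of the cut formula $\vphi$ leads, its logical complexity comes second, and the sum of the lengths of the two given derivations $\mc{D}_0$ (of $\Gamma\Ra\Delta,\vphi^{l_0}$) and $\mc{D}_1$ (of $\vphi^{l_1},\Gamma\Ra\Delta$) comes last. I would then distinguish cases according to whether $\vphi$ is principal in the last inference of $\mc{D}_0$ and in that of $\mc{D}_1$, appealing freely to the structural lemmata already proved — monotonicity and the facts on $\top,\bot$ (Lemma~\ref{lem:topbot}), substitution and weakening, invertibility of the operational rules (Lemma~\ref{lem:invlog}), and $\tau$-admissibility of contraction (Lemma~\ref{lem:admcon}) — all of which leave lengths, cut-rank, and $\T$-complexities under control.

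If $\vphi$ is not principal in the last inference of $\mc{D}_0$ (the case of $\mc{D}_1$ being symmetric), I would permute the cut above that inference: each premiss still carries $\vphi$ as a side formula, with $\T$-complexity $\leq l_0$, so it can be cut against a suitably weakened copy of $\mc{D}_1$ by the induction hypothesis — which applies because the third component $n_0+n_1$ strictly decreases while the first two do not increase — and the last rule of $\mc{D}_0$ re-applied (renaming eigenvariables via the substitution lemma in the ({\sc $\forall$r}) subcase). If $\mc{D}_0$ is itself an initial sequent with $\vphi$ only a side formula, deleting $\vphi$ from its succedent already yields an initial sequent. This settles in particular every cut on a formula $\vphi\in{\rm AtFml}_{\mc{L}}$, since such a $\vphi$ can be principal only in {\sc (ref$^-$)}.

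The remaining case is that $\vphi$ is principal on both sides. If one of the derivations is an initial sequent with $\vphi$ principal, then either $\vphi$ is atomic in $\mc{L}$, and {\sc (ref$^-$)} duplicates it so that one appeal to Lemma~\ref{lem:admcon} closes the case, or $\vphi$ is $\top$ or $\bot$ and Lemma~\ref{lem:topbot} closes it. Otherwise $\vphi$ is $\neg\psi$, $\psi_1\land\psi_2$, $\forall x\psi$, or $\T\corn{\psi}$, introduced on the two sides by the matching pair of operational rules. For the connectives I would proceed in the textbook way: use the appropriate clause of Lemma~\ref{lem:invlog} on $\mc{D}_0$ and $\mc{D}_1$ (together with the substitution lemma in the $\forall$-case) to expose the immediate subformula(s) and cut on them via the induction hypothesis — legitimate since $\lth{\cdot}$ strictly drops while, by the side conditions of Lemma~\ref{lem:invlog}, $\T$-complexities do not rise; the $\land$-case takes the familiar two nested cuts, using Lemma~\ref{lem:admcon} only to merge contexts. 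The one genuinely new case is $\vphi\equiv\T\corn{\psi}$: here $\mc{D}_0$ ends with an application of ({\sc $\T$r}) to a derivation of $\Gamma\Ra\Delta,\psi$ in which $\tau(\psi)=l_0-1$, and $\mc{D}_1$ with ({\sc $\T$l}) applied to one of $\psi,\Gamma\Ra\Delta$ with $\tau(\psi)=l_1-1$ (Definition~\ref{dfn:tcompl}), and I would cut these on $\psi$. The point — and the reason the classical induction on the logical complexity of the cut formula fails here, since $\T\corn{\psi}$ is atomic while $\psi$ need not be — is that this use of the induction hypothesis is nonetheless admissible: because $\maxf(l_0-1,l_1-1)<\maxf(l_0,l_1)$, the leading component of the measure has strictly decreased, so the value of $\lth{\psi}$ is irrelevant to the lexicographic comparison. (Equivalently, one applies the strong invertibility of the truth rules, Lemma~\ref{lem:invlog}\ref{invuno}, to $\mc{D}_0$ and $\mc{D}_1$ directly; it is precisely the strict \emph{decrease} of $\T$-complexity recorded there — which, as Remark~\ref{rem:invert} stresses, depends on the restriction of initial sequents — that makes the step available.)

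Finally I would verify the bookkeeping. Cut-rank never rises: every cut the construction introduces is discharged at once by the induction hypothesis, whose conclusion again has cut-rank $\leq m$, and weakening, inversion, contraction, and substitution all preserve cut-rank; so the resulting derivation of $\Gamma\Ra\Delta$ has cut-rank $\leq m$. The $\T$-complexity of each surviving context formula stays $\leq$ the maximum over its source occurrences, by the corresponding non-increase clauses of the structural lemmata and of the induction hypothesis. And the length bound $n_0+n_1$ follows by the routine accounting for permutations and connective reductions. I expect the ({\sc $\T$r})/({\sc $\T$l}) case to be the only non-routine point, and the whole argument to turn on the fact that running the induction with $\T$-complexity in the leading position — which is licensed by the strong invertibility of the truth rules, hence ultimately by the restriction of initial sequents — is exactly what makes that case go through.
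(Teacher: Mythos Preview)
Your plan matches the paper's almost exactly: both run a lexicographic induction with the $\T$-complexity $\maxf(l_0,l_1)$ of the cut formula in leading position, both dispatch the non-principal cases by a drop in $n_0+n_1$, and both identify the principal--principal case $\vphi\equiv\T\corn{\psi}$ as the one genuinely new step, handled because the first component strictly drops. The paper, like you, calls the remaining logical principal--principal cases ``standard'' and does not spell them out.

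The one substantive divergence is your choice of the \emph{second} induction component. You take $\lth{\vphi}$; the paper instead takes the cut-rank parameter $m$, and its induction hypothesis carries the side condition $\lth{\psi}\leq m'$ (so implicitly $\lth{\vphi}\leq m$ throughout). This difference bites in the length bookkeeping for the two-premiss connective cases. If you treat the $\land$-case by inverting and then discharging \emph{both} subformula cuts via the induction hypothesis, the second application of IH receives a derivation of length $n_0+n_1-2$ on one side and $n_0-1$ on the other, giving only $2n_0+n_1-3$ rather than $n_0+n_1$; the $\forall$-case is similar. The paper's set-up avoids this because, with $\lth{\vphi}\leq m$, the inner subformula cut can be left as an \emph{actual} cut of rank $\leq m$ (length cost $+1$ on the max), after which $\max(n_0,n_1)+1\leq n_0+n_1$ suffices. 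So your induction scheme correctly establishes admissibility, but the specific bound $n_0+n_1$ asserted in the lemma does not follow from ``routine accounting'' under a pure-IH treatment of the connective cases; to recover it you should either swap $\lth{\vphi}$ for $m$ (adding $\lth{\vphi}\leq m$ as in the paper) or explicitly mix actual cuts with IH in those cases.
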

	
	\begin{proof}
		 The proof is by multiple, complete induction on $(l,m,n_0+n_1)$, with $l=\maxf(\tau(\vphi^{l_0}),\tau(\vphi^{l_1}))$. Our induction hypothesis is thus:
		\begin{equation}\label{eq:reduih}
			 \text{$\sststile{m',k}{n_0'}\Gamma\Ra \Delta, \psi^{l_0'}$ and $\lpc\sststile{m',k}{n_1'}\psi^{l_1'},\Gamma\Ra \Delta$ entail $\sststile{m',k}{n_0'+n_1'}\Gamma\Ra \Delta$},
		\end{equation} 
		for $|{\psi}|\leq m'$, $l'=\maxf(\tau(\psi^{l_0'}),\tau(\psi^{l_1'}))$, and $(l',m',n_0'+n_1')\prec(l,m,n_0+n_1)$. We only focus on cases in which $\T$-complexity plays a crucial role. The rest is standard. 
		 
		 If one of $\Gamma\Ra \Delta, \vphi^{l_0}$ or $\Gamma\Ra \Delta, \vphi^{l_1}$ is an axiom, one has to distinguish different subcases: If $\vphi^{l_0}$ or $\vphi^{l_1}$ are principal, then depending on whether $\vphi$ is $\bot$, $\top$, or atomic, we employ Lemma \ref{lem:topbot}(i) (in the former cases), or Lemma \ref{lem:admcon}(i). If neither of $\vphi^{l_0}$ and $\vphi^{l_1}$ is principal, then $\Gamma \Ra \Delta$ is already an axiom with minimal $\T$-complexity.  

		 Suppose now that none  of $\Gamma\Ra \Delta, \vphi^{l_0}$ or $\vphi^{l_1},\Gamma\Ra \Delta$ are axioms, but $\vphi$ is not principal in the last inference of one of their derivations, for instance the derivation of $\Gamma\Ra \Delta, \vphi^{l_0}$. In such cases, the strategy is analogous for all rules. Let's consider the case of ({\sc $\T$l}) as an example; that is, the case in which one has
		 \begin{align*}
		 	&\sststile{m,k}{n_0} \Gamma_0,\presup{p}\T\corn{\psi}\Ra \Delta,\vphi 
				&&\sststile{m,k}{n_1}\vphi,\Gamma_0,\T\corn{\psi} \Ra \Delta
		 \end{align*}
		 and the leftmost claim is obtained by ({\sc $\T$l}) from
		 \[
		 	\sststile{m,k_0}{n_{00}} \Gamma_0, \presup{p_0}\psi\Ra \Delta,\vphi
		 \]
		 with $p=p_0+1\leq k,n_{00}<n_0$ and $k_0\leq k$, and $\Gamma\equiv\Gamma_0,\T\corn{\psi}$. By the weakening lemma, we then obtain
		 \begin{align*}
		 	& \sststile{m,k_0}{n_{00}} \Gamma_0,\presup{0}\T\corn{\psi}, \psi\Ra \Delta,\vphi
				&&\sststile{m,k}{n_1}\vphi,\Gamma_0,\presup{p_1}\T\corn{\psi},\presup{0}\psi \Ra \Delta
		 \end{align*}
		with $p_1\leq k$. Since $n_{00}+n_1<n_0+n_1$, the induction hypothesis yields:
		\[
			\sststile{m,k}{n_{00}+n_1} \Gamma_0,\presup{p_1}\T\corn{\psi},\presup{p_0}\psi\Ra \Delta.
		\]
		 By applying ({\sc $\T$l}) and lemma \ref{lem:admcon}, one obtains that
		\[
			\sststile{m,k}{n_{00}+1+n_1}\Gamma_0,\T\corn{\psi}\Ra \Delta.
		\]
		This, however, yields the desired claim since $n_{00}+1+n_1\leq n_0+n_1$ and $\tau(\T\corn{\psi})=\maxf(p,p_1) $.
		
		We are left with the case in which both $\vphi^{l_0}$ and $\vphi^{l_1}$ are principal in the last inferences of the relevant derivations. Here the crucial case in which  $\vphi\equiv \T\corn{\psi}$ follows directly by the main induction hypothesis, since if our premisses are obtained via applications of the truth rules from
		\begin{align*}
			&\sststile{m,k_0}{n_{00}} \Gamma \Ra \Delta,\psi^{l_{00}}&& \sststile{m,k_1}{n_{10}} \psi^{l_{10}},\Gamma \Ra \Delta
		\end{align*}
		with $\tau(\psi^{l_{10}}),\tau(\psi^{l_{00}})< l$, the induction hypothesis and the monotonicity properties of $\lpc\sststile{}{}$ immediately yield $\sststile{m,k}{n_0+n_1}\Gamma \Ra \Delta$ with the correct $\T$-complexities in $\Gamma,\Delta$
		
		It is worth noting that the case in which $\vphi\equiv \forall x\psi$ is treated standardly as well but one has first to get rid of the universal quantifier in the premise of {\sc ($\forall$l)}. This involves an essential application of the substitution lemma that, as we know, leaves $\T$-complexities unchanged. 
	\end{proof}

	As is it clear from the Reduction Lemma, we obtain a cut-elimination theorem with standard hyper-exponential upper bounds. 
	
	\begin{corollary}
		If $\lpc \sststile{m,k}{n}\Gamma\Ra \Delta$, then $\lpc\sststile{0,k}{2^n_m}\Gamma \Ra\Delta$. 
	\end{corollary}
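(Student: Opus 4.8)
The Corollary is the cut-elimination theorem, and I would obtain it by the standard two-step argument. Write $2^n_m$ for the $m$-fold iterated exponential, so that $2^n_0=n$ and $2^n_{j+1}=2^{2^n_j}$. First I would establish the auxiliary claim that one layer of {\sc cut}s of maximal rank can be removed at the price of a single exponentiation:
\[
	\text{if }\lpc\sststile{m+1,k}{n}\Gamma\Ra\Delta,\text{ then }\lpc\sststile{m,k}{2^n}\Gamma\Ra\Delta,
\]
and I would prove this by complete induction on $n$, distinguishing cases on the last inference of the given derivation $\mc{D}$. The feature to keep in mind throughout is that the bound $k$ on $\T$-complexity is entirely passive here: none of the transformations below increases it, because the Reduction Lemma \ref{lem:redlpt}, the monotonicity clause of Lemma \ref{lem:topbot}(i), and the reapplication of rules all leave $\T$-complexities bounded by those already present in the premisses.

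If $\mc{D}$ is a single initial sequent, its cut-rank is $0\leq m$ and there is nothing to do. If the last inference is any rule \emph{other} than a {\sc cut} of rank exactly $m+1$ — this includes {\sc cut}s of rank $\leq m$ — then every immediate subderivation has cut-rank $\leq m+1$ and length $<n$, so the induction hypothesis turns it into a derivation of the same sequent of cut-rank $\leq m$ and length $\leq 2^{n-1}$; reapplying the rule (the eigenvariable condition of {\sc ($\forall$r)} still holding, since the end-sequent is unchanged) yields $\Gamma\Ra\Delta$ of cut-rank $\leq m$ and length $\leq 2^{n-1}+1\leq 2^n$, and Lemma \ref{lem:topbot}(i) gives $\lpc\sststile{m,k}{2^n}\Gamma\Ra\Delta$. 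If instead the last inference is a {\sc cut} of rank $m+1$ — that is, on a formula $\vphi$ with $\lth{\vphi}=m$ — from premisses of lengths $n_0,n_1<n$, the induction hypothesis yields
\[
	\lpc\sststile{m,k}{2^{n_0}}\Gamma\Ra\Delta,\vphi \qquad\text{and}\qquad \lpc\sststile{m,k}{2^{n_1}}\vphi,\Gamma\Ra\Delta.
\]
These two derivations now have cut-rank $\leq m$, so the Reduction Lemma \ref{lem:redlpt} applies and gives $\lpc\sststile{m,k}{2^{n_0}+2^{n_1}}\Gamma\Ra\Delta$, whence $\lpc\sststile{m,k}{2^n}\Gamma\Ra\Delta$ since $2^{n_0}+2^{n_1}\leq 2^{n-1}+2^{n-1}=2^n$; moreover the Reduction Lemma guarantees that each $\T$-complexity in the conclusion is bounded by the maximum of the corresponding occurrences in its premisses, hence stays $\leq k$. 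This finishes the auxiliary claim.

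The Corollary then follows by applying the auxiliary claim $m$ times in succession, starting from $\lpc\sststile{m,k}{n}\Gamma\Ra\Delta$: the cut-rank descends through $m, m-1,\ldots,0$ while the length runs through $2^n, 2^{2^n},\ldots$, i.e.\ through $2^n_1, 2^n_2,\ldots,2^n_m$, and the bound $k$ on $\T$-complexity never changes. (The case $n=0$ is trivial, since then $\Gamma\Ra\Delta$ is already an axiom.)

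I do not expect any genuine obstacle at this level. Essentially all of the work is already done by the Reduction Lemma, and the present argument only adds the routine bookkeeping of length bounds — the inequalities $2^{n-1}+1\leq 2^n$ and $2^{n_0}+2^{n_1}\leq 2^n$ for $n\geq1$ — together with repeated appeals to monotonicity. The point that is decisive but invisible here is that a {\sc cut} on $\T\corn{\psi}$ with both occurrences principal can only be reduced via a {\sc cut} on $\psi$, whose logical complexity may far exceed $m$ (Remark \ref{rem:invert}); this is absorbed inside the Reduction Lemma, whose multiple induction descends on the $\T$-complexity of the cut formula rather than on its logical complexity, and which itself rests on the strong invertibility of the truth rules (Lemma \ref{lem:invlog}) and the $\tau$-admissibility of contraction (Lemma \ref{lem:admcon}). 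Those lemmata are precisely what make the clean hyper-exponential bound above available; beyond them, the Corollary requires nothing new.
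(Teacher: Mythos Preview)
Your argument is correct and is exactly the standard derivation of hyper-exponential cut-elimination from the Reduction Lemma that the paper has in mind; the paper itself does not spell this out but simply remarks that the bound is ``clear from the Reduction Lemma'', and your two-step argument (single-exponential rank reduction by induction on $n$, then $m$-fold iteration) is precisely the intended unpacking.
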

	Cut-elimination obviously entails the consistency of $\lpc$, defined for instance as the non-derivability of the empty sequent in $\lpc$. This may be considered to be a nice feature of $\lpc$ \emph{qua} theory of disquotational truth, as its consistency does not require more substantial notions of mathematical truth such as the ones involved in model-theoretic consistency proofs. However, often the presence of nice models -- even if interpreted in a purely instrumental way -- is a sign of the conceptual richness of one's truth predicate. We will see (section \ref{sec:seminf}) that $\lpc$ also features nice models.

	
	\section{Extension with arithmetical axioms}\label{sec:ariext}
	

The cut elimination above can be easily extended to induction-free, arithmetical base theories. For definiteness, we choose our base arithmetical theory to be Robinson's ${\rm Q}$. However, what is relevant for our discussion is the \emph{geometric} nature of such arithmetical axioms. We adapt to our setting the approach to the proof-theory of geometric rules investigated by \cite{nevo11}. Since the main structural lemmata have been introduced, this mainly involves checking that Negri and Von Plato's extension with geometric axioms interacts well with the truth rules and in particular with the notion of $\T$-complexity and its properties.

In this section we work with the language $\lnat$ of arithmetic. For definiteness, we assume the language of arithmetic is specified by the signature $\{0,{\rm S},+,\times\}$  and let $\ltnat:=\lnat\cup\{\T\}$. We assume a standard G\"odel numbering of $\lt$ and write $\#e$ for the G\"odel number of the $\lt$-expression $e$ and $\corn{e}$ for the corresponding numeral. Numerals are defined as: $\ovl{0}:=0$ and $\ovl{n+1}={\rm S}\ovl{n}$. 

The axioms of Robinson's arithmetic ${\rm Q}$ are the universal closures of the following $\lnat$-formulas:
	\begin{align*}
		  &\neg 0= {\rm S}(x),&&{\rm S}(x)={\rm S}(y)\ra x=y,\\
		&x=0\vee \exists y(x={\rm S}(y)),&& x+0=x,\\
		& x+{\rm S}(y)={\rm S}(x+y),&& x\times 0=0,\\
		& x\times {\rm S}(y)=(x\times y)+x.
	\end{align*}

As indicated in \cite{nevo11}, a ${\rm G3}$-version of ${\rm Q}$ -- equivalent to the axiom based system given above -- can be defined. In the present context, it will play the role of the base theory of our theory of truth, in that it provides us with some explicit machinery for naming sentences of our language. Unlike what is done in the previous section, we will simultaneously define derivations in our base system and the relevant measures by means of the relation $\qg\sststile{m,k}{n}$. We will include a parameter for the $\T$-complexity in this definition to allow for straightforward extensions, although of course if one focuses on purely arithmetical derivations the $\T$-complexity of the proof is always $0$.  
\begin{dfn}[$\qg$]\label{dfn:qg}
	$\qg$ extends the logic of $\lpc$ formulated in $\lt$ --  together with a restriction of {\sc (ref)} to atomic formulas of $\lnat$ and by omitting $(\bot)$ and $(\top)$ -- with the following rules
	
		\begin{itemize}\setlength\itemsep{1em}
			\item[$(=1)$] If $\qg\sststile{m,k}{n_0}\Gamma, t=t \Ra \Delta$, then $\qg\sststile{m,k}{n} \Gamma \Ra \Delta$, with $n_0< n$. 
			\item[$(=2)$] If $\qg\sststile{m,k}{n_0} s=t,\vphi(s),\vphi(t),\Gamma\Ra\Delta$, then $\sststile{m,k}{n}s=t,\vphi(t),\Gamma \Ra \Delta$, with $\vphi(v)$ an atomic formula of $\lnat$ and $n_0<n$.
			\item[$({\rm Q^g}1)$]  $\qg\sststile{m,k}{n}\Gamma,{\rm S}x=0 \Ra\Delta$ for any $n,m,k$.
			\item[$({\rm Q^g}2)$] If $\qg\sststile{m,k}{n_0} \Gamma,x=y,{\rm S}(x)={\rm S}(y)\Ra \Delta$, then $\qg\sststile{m,k}{n}\Gamma,{\rm S}(x)={\rm S}(y)\Ra \Delta$, with $n_0<n$.
			\item[$({\rm Q^g}3)$] If $\qg\sststile{m,k}{n_0}\Gamma,x=0\Ra \Delta$ and $\qg\sststile{m,k}{n_1}\Gamma,y={\rm S}(x)\Ra \Delta$, then $\qg\sststile{m,k}{n}\Gamma \Ra \Delta$, with $n_0,n_1<n$ and with $y\notin {\rm FV}(\Gamma,\Delta,x=0)$.
			\item[$({\rm Q^g}4)$] If $\qg\sststile{m,k}{n_0} \Gamma,x+0=x\Ra \Delta$, then $\qg\sststile{m,k}{n}\Gamma\Ra \Delta$, with $n_0<n$. 
			\item[$({\rm Q^g}5)$] If $\qg\sststile{m,k}{n_0}\Gamma,x+{\rm S}(y)={\rm S}(x+y)\Ra \Delta$, then $\qg\sststile{m,k}{n_0}\Gamma\Ra\Delta$, with $n_0<n$. 
			\item[$({\rm Q^g}6)$] If $\qg\sststile{m,k}{n_0} \Gamma,x\times 0=0\Ra \Delta$, then $\qg\sststile{m,k}{n}\Gamma\Ra \Delta$, with $n_0<n$. 
			\item[$({\rm Q^g}7)$] If $\qg\sststile{m,k}{n_0}\Gamma,x\times {\rm S}(y)=(x\times y)+x\Ra \Delta$, then $\qg\sststile{m,k}{n}\Gamma\Ra\Delta$, with $n_0<n$. 
		\end{itemize}
\end{dfn}
\noindent In $({\rm Q^g}3)$, $y$ acts as an eigenvariable, because it is intended to be playing the role of an existentially quantifiable variable. 

As before, by a straightforward induction on the length of the proof in $\qg$, we can show that, as far as formulas of $\lnat$ are concerned, reflexivity holds for them.  
The next lemma states that, as desired, $\qg$ and ${\rm Q}$ prove the same theorems.
	\begin{lemma}
	 	${\rm Q}\vdash \bigwedge \Gamma \ra \bigvee \Delta$ if and only if $\qg\vdash \;\Gamma \Ra \Delta$. 
	\end{lemma}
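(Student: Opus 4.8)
The plan is to establish the equivalence by proving the two implications separately. The statement is just the expected correspondence between the geometric theory $\mathrm{Q}$ — presented axiomatically over classical first-order logic with equality — and its ${\bf G3}$-style reformulation $\qg$, in which each geometric axiom has been turned into a rule in the style of \cite{nevo11}. Throughout, the key simplifying observation is that $\mathrm{Q}$ is an $\lnat$-theory, so every sequent that matters is over $\lnat$ and the truth rules, together with the restriction of initial sequents to $\lnat$-atoms, play no role whatsoever; in particular the logical core of $\qg$ together with {\sc cut} (still available at this stage) is complete for classical predicate logic with equality on $\lnat$-sequents, and reflexivity holds for all $\lnat$-formulas.

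For the direction ``$\qg\vdash\Gamma\Ra\Delta$ implies $\mathrm{Q}\vdash\bigwedge\Gamma\ra\bigvee\Delta$'' I would argue by induction on the length of the $\qg$-derivation. The initial sequents $({\rm ref}^-)$ and the logical rules of $\lpc$ are plainly sound for classical consequence; $(=1)$ and $(=2)$ are justified by the reflexivity and replacement principles of first-order logic with equality; and each rule $({\rm Q^g}1)$--$({\rm Q^g}7)$ is sound because the matching axiom of $\mathrm{Q}$ is available: $({\rm Q^g}1)$ because $\mathrm{Q}\vdash\neg\,{\rm S}x{=}0$, so its conclusion holds vacuously; $({\rm Q^g}4)$--$({\rm Q^g}7)$ because their principal equations are literally axioms, hence $\mathrm{Q}$-provable; $({\rm Q^g}2)$ because $\mathrm{Q}\vdash{\rm S}(x){=}{\rm S}(y)\ra x{=}y$; and $({\rm Q^g}3)$ because, the induction hypothesis giving $\mathrm{Q}\vdash\bigwedge\Gamma\wedge x{=}0\ra\bigvee\Delta$ as well as the successor case with its eigenvariable not free in the conclusion, one generalises the latter and then discharges the case split using the axiom $x{=}0\vee\exists y\,x{=}{\rm S}(y)$. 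The eigenvariable side-condition on $({\rm Q^g}3)$ is exactly what makes that universal generalisation legitimate.

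For the converse I would proceed in three steps. (i) Derive inside $\qg$ each axiom $A(\vec z)$ of $\mathrm{Q}$, i.e. $\qg\vdash\;\Ra A(\vec z)$, and hence $\qg\vdash\;\Ra\forall\vec z\,A$ by $(\forall{\rm r})$: this is the completeness half of the axiom/rule correspondence, checked axiom by axiom from the corresponding arithmetical rule, using the admissibility of the ${\bf G3}$-rules for $\vee$ and $\exists$, the derivability of $t{=}t$ (an $\lnat$-atom, hence an instance of $({\rm ref}^-)$ after $(=1)$), and the derivability of symmetry, transitivity and Leibniz replacement for all $\lnat$-formulas from $(=1),(=2)$; for example $\Ra\neg(0{=}{\rm S}x)$ comes from the instance ${\rm S}x{=}0\Ra$ of $({\rm Q^g}1)$ together with symmetry, {\sc cut} and $(\neg{\rm r})$, while $\Ra x{=}0\vee\exists y\,x{=}{\rm S}(y)$ comes from $({\rm Q^g}3)$ applied to the two trivially derivable premisses covering its disjuncts. (ii) Invoke completeness of the logical core of $\qg$ with {\sc cut} for classical predicate logic with equality over $\lnat$-sequents. (iii) Combine: if $\mathrm{Q}\vdash\bigwedge\Gamma\ra\bigvee\Delta$ then, by compactness, finitely many axiom instances $A_1,\dots,A_j$ already suffice, so $\qg\vdash A_1,\dots,A_j,\Gamma\Ra\Delta$ by (ii), and $j$ applications of {\sc cut} against the sequents $\Ra A_i$ obtained from (i) (suitably instantiated via the substitution lemma, which leaves $\T$-complexities untouched) yield $\qg\vdash\Gamma\Ra\Delta$.

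The routine but fiddly part — and essentially the only obstacle — is step (i): one must check that, once the absent connectives $\vee,\exists$ are unfolded through their definitions, each rule $({\rm Q^g}i)$ genuinely matches the intended axiom, and that the eigenvariable conditions (on $({\rm Q^g}3)$, and on $(\forall{\rm r})$ when forming the universal closures) are respected. Everything else is the standard \cite{nevo11} translation, which carries over unchanged here precisely because the axioms of $\mathrm{Q}$ are geometric and the extra ingredients of $\qg$ are invisible to $\lnat$-sequents.
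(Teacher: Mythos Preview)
The paper does not actually prove this lemma: it is stated without proof, with the correspondence between geometric axioms and their rule-form implicitly deferred to \cite{nevo11}. Your proposal supplies exactly the standard Negri--von Plato argument the paper is gesturing at, and it is correct. One clarification worth making explicit (you do make it, but only in passing): the lemma must be read with $\Gamma,\Delta$ consisting of $\lnat$-formulas, since $\qg$ is formulated in $\lt$ and the statement would otherwise be false; your observation that the truth apparatus is ``invisible to $\lnat$-sequents'' is precisely what licenses treating the logical core of $\qg$ as a complete ${\bf G3}$-calculus for classical logic with equality.
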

	
%
	
	The system $\lptn$ is obtained by extending $\qg$ with fully disquotational truth. The truth rules are only notational variations of {\sc ($\T$l)} and {\sc ($\T$r)}. 
		\begin{dfn}
		The relation $\lptn \sststile{m,k}{n}$ is defined by means of the direct analogues of clauses  $(=1)$-$(\qg7)$ from Definition \ref{dfn:qg} plus:
		\begin{itemize}
			\item[$(\T\text{{\sc r}}^\nat)$] If $\lptn\sststile{m,k_0}{n_0}\Gamma \Ra\vphi,\Delta$, then $\lptn\sststile{m,k}{n}\Gamma\Ra \T \ovl{l},\Delta$, with $n_0<n, k_0\leq k$, $l=\#\vphi$ with $\vphi$ a sentence of $\lt$, $\tau(\T\ovl{l})=\tau(\vphi)+1$, and the $\T$-complexities of the side formulas are unchanged. 
			\item[$(\T\text{{\sc l}}^\nat)$] If $\lptn\sststile{m,k_0}{n_0}\Gamma,\vphi \Ra\Delta$, then $\lptn\sststile{m,kk}{n} \Gamma,\T \ovl{l}\Ra\Delta$, with $n_0<n, k_0\leq k$, $l=\#\vphi$ with $\vphi$ a sentence of $\lt$, $\tau(\T\ovl{l})=\tau(\vphi)+1$, and the $\T$-complexities of the side formulas are unchanged. 
		\end{itemize}
			\end{dfn}
	\begin{remark}\label{rem:purevar}
		In the rest of the section, we assume that so-called pure variable convention. That is, free and bound variables are always distinct in proofs, and that the eigenvariables of applications of $({\rm Q^g}3)$ in proofs are distinct. 
	\end{remark}
			
	As before, the identity axioms hold unrestrictedly for sentences of $\lnat$, so we have
	\begin{equation}\label{eq:refq}
		\text{$\lptn \vdash \Gamma,\vphi\Ra \vphi,\Delta$ for all $\vphi\in \lnat$.} 
	\end{equation}
	
	 The substitution lemma for $\lptn$ -- compared with its analogue in the previous section -- needs a little extra care in dealing with the variables of the geometric rules. Essentially, in the required induction on the length of the proof in $\lptn$, the cases of {\sc ($\forall$r)} and $({\rm Q^g}3)$ require the eigenvariables not to occur in the \emph{substituens}. Similarly, in the weakening lemma one only needs to be careful that the weakened formulas do not contain variables that may appear in geometric rules. In such cases the substitution lemma can be employed. $\T$-complexities are handled in precisely the same way as before. 
	
	\begin{lemma}[Substitution, Weakening]\hfill
		\begin{enumeratei}
			\item If $\lptn\sststile{m,k}{n}$, then $\lptn\sststile{m,k}{n} \Gamma(t/x)\Ra \Delta(t/x)$ where $t$ is free for $x$ in $\Gamma,\Delta$ and it does not contain any eigenvariables employed in applications of {\sc ($\forall$r)}, as well as variables employed  ${\rm Q^g}$-rules. The substitution does not change the $\T$-complexity of the formulas occurring in $\Gamma,\Delta$. 
			\item If $\lptn \sststile{m,k}{n}\Gamma\Ra \Delta$, then $\lptn \sststile{m,k}{n}\Gamma,\Theta\Ra \Delta,\Lambda$ with $\Theta$ and $\Lambda$ not containing variables appearing in geometric rules and whose formulas have minimal $\T$-complexity. Moreover, the $\T$-complexity of each formula in $\Gamma,\Delta$ is unchanged. 
		\end{enumeratei}
	\end{lemma}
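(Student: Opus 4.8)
The plan is to prove (i) and (ii) together, by induction on the length $n$ of the given $\lptn$-derivation, transcribing the corresponding argument for $\lpc$ from the previous section and isolating the geometric clauses $(=1)$, $(=2)$ and $({\rm Q^g}1)$--$({\rm Q^g}7)$ of Definition \ref{dfn:qg} as the only new cases. The observation that makes the $\T$-complexity claims painless is twofold: a truth ascription is always of the form $\T\ovl{l}$ with $\ovl{l}=\corn{\vphi}$ a \emph{closed} numeral, so no substitution ever alters it; and weakened side formulas never become principal and never enter the recursive clauses of Definition \ref{dfn:tcompl} computing $\tau$ of the formulas already present. In both parts, then, the $\T$-complexity bookkeeping reduces to the remark that the recursion defining $\tau$ is evaluated on exactly the same data before and after the operation.

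For part (i), at an initial-sequent node -- by Definition \ref{dfn:qg} an instance of {\sc (ref$^-$)} on an atomic $\lnat$-formula, or an axiom-sequent of the form $\Gamma,{\rm S}x=0\Ra\Delta$ -- substitution by a term $t$ free for $x$ and, by hypothesis, avoiding the variables of the ${\rm Q^g}$-rules sends atomic $\lnat$-formulas to atomic $\lnat$-formulas and preserves the axiom-sequents, so the result is again initial and every $\T$-complexity is still $0$. In the inductive step, for the propositional rules, the truth rules $(\T\text{{\sc r}}^\nat),(\T\text{{\sc l}}^\nat)$ and the equality rules $(=1),(=2)$, one pushes the substitution through the premisses, applies the induction hypothesis, and reapplies the rule; for {\sc ($\forall$r)} and $({\rm Q^g}3)$ the eigenvariable is -- by the pure variable convention of Remark \ref{rem:purevar} and the side condition on $t$ -- distinct from $x$ and absent from $t$, so the substitution commutes with the eigenvariable substitution in the premiss and the rule is reinstated without renaming.

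For part (ii) one again inducts on $n$. At an initial-sequent node one adjoins $\Theta$ and $\Lambda$ as further side formulas; by the analogue of Definition \ref{dfn:tcompl}\ref{case:weatc} every formula of such a node has $\T$-complexity $0$, so the new formulas receive minimal complexity and the old ones are unchanged. In the inductive step one applies the induction hypothesis to the premiss(es) and reapplies the last rule, so the length bound and the $\T$-complexities of $\Gamma,\Delta$ are preserved; the single point of friction is {\sc ($\forall$r)} and $({\rm Q^g}3)$, where, should an eigenvariable occur in $\Theta$ or $\Lambda$, one first renames it throughout the relevant subderivation by part (i) -- which changes neither the length nor any $\T$-complexity -- and then proceeds as above. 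The hypothesis that $\Theta,\Lambda$ contain no variable appearing in the geometric rules guarantees such a renaming is always available.

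The main -- essentially the only -- obstacle is organisational: part (i) must be set up so that it is usable inside the proof of (ii), since the eigenvariable clashes of the geometric rules in the weakening step are resolved precisely by the substitution lemma's invariance of length and $\T$-complexity. Beyond that, the proof is a line-by-line repetition of the $\lpc$ case.
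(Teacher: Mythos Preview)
Your proposal is correct and follows the same approach the paper sketches in the paragraph immediately preceding the lemma: induction on the length of the derivation, with the only new care points being the eigenvariable conditions in {\sc ($\forall$r)} and $({\rm Q^g}3)$, and the use of part~(i) inside part~(ii) to rename eigenvariables clashing with $\Theta,\Lambda$. Your observation that $\T\ovl{l}$ is closed, so substitution never disturbs the $\tau$-bookkeeping of truth ascriptions, is a useful explicit remark the paper leaves implicit.
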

	The invertibility lemma also proceeds with minor variations. Crucially, the kind of $\tau$-invertibility for the truth rules involved in lemma \ref{lem:invlog}(i) is preserved. To prove an analogue of Lemma \ref{lem:invlog}(v), one employs Remark \ref{rem:purevar} to ensure that if the last inference involves a geometric rule such as $({\rm Q^g}3)$, the role of the eigenvariable in the geometric rule is preserved.
	\begin{lemma}[Inversion]
		The propositional logical rules of $\lptn$ are $\tau$-invertible in the way described by Lemma  \ref{lem:invlog}(ii)-(iv). Moreover:
		\begin{enumeratei}
			\item If $\lptn \sststile{m,k}{n}\Gamma,\T\ovl{l}\Ra \Delta$ with $l=\#\vphi$, then $\lpc\sststile{m,k}{n} \Gamma,\vphi\Ra \Delta$, with
			\begin{align*}
				&\tau(\vphi)\leq \tau(\T\ovl{l}),\text{ if $\tau(\T\ovl{l})=0$},\\
				&\tau(\vphi)<\tau(\T\ovl{l}),\text{ if $\tau(\T\ovl{l})>0$},
			\end{align*}
			and with unchanged $\T$-complexity in the side formulas. 
			
			A symmetric claim holds when $\lpc \sststile{m,k}{n}\Gamma\Ra\T\corn{\ovl{l}}, \Delta$ with $l=\#\vphi$.
			
			\item  If $\lptn \sststile{m,k}{n} \Gamma\Ra\Delta,\forall x\vphi$, then $\lptn \sststile{m,k}{n} \Gamma\Ra\Delta,\vphi(y)$, for  any $y$ not free in $\Gamma,\Delta,\forall x\vphi$ and not among the variables of geometric rules, with $\tau(\vphi(y))\leq\tau(\forall x\vphi)$ and in which the complexity of the side formulas does not increase.
		\end{enumeratei}	
	\end{lemma}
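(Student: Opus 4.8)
The plan is to re-run the induction behind Lemma \ref{lem:invlog}, this time on the length $n$ of the given $\lptn$-derivation, the only genuinely new situations being those in which the last inference is one of the geometric rules $(=1)$--$({\rm Q^g}7)$ of Definition \ref{dfn:qg}. The structural fact that trivialises these situations is that every geometric rule has as its active and principal formulas only \emph{atomic formulas of $\lnat$} --- which carry $\T$-complexity $0$ by Definition \ref{dfn:tcompl} --- and leaves every side formula, together with its $\T$-complexity, untouched. Hence in clause (i), whenever the last inference of the derivation of $\Gamma,\T\ovl{l}\Ra\Delta$ is a geometric rule, the displayed $\T\ovl{l}$ is necessarily a side formula: I apply the induction hypothesis to the premiss(es) to replace $\T\ovl{l}$ by $\vphi$ with the required behaviour of $\tau$, and then re-apply the same geometric rule, without change to $m$, $k$ or $n$. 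The identical observation settles the geometric cases of clauses (ii)--(iv) of Lemma \ref{lem:invlog} (negation, $\land$-left, $\land$-right) and of the new $(\forall\text{r})$-clause, so that for all logical rules the argument is a verbatim transcription of the proof of Lemma \ref{lem:invlog}.

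For the base case of clause (i) I use that the initial sequents of $\lptn$ are $(\text{ref}^-)$ restricted to $\lnat$-atoms together with the zero-premiss rule $({\rm Q^g}1)$; since $\T\ovl{l}\notin {\rm AtFml}_{\lnat}$, in either case the axiomhood of $\Gamma,\T\ovl{l}\Ra\Delta$ rests on \emph{other} formulas, so $\Gamma,\vphi\Ra\Delta$ is again an axiom, and by Definition \ref{dfn:tcompl} (extended in the obvious way to the initial sequents of $\lptn$) every formula in it --- in particular $\vphi$ --- has $\T$-complexity $0=\tau(\T\ovl{l})$. The inductive step in which $\T\ovl{l}$ is principal reduces, exactly as in Lemma \ref{lem:invlog}(i), to the premiss of $(\T\text{{\sc l}}^\nat)$: there $\tau(\T\ovl{l})>0$ and the premiss furnishes $\vphi$ with $\tau(\vphi)=\tau(\T\ovl{l})-1<\tau(\T\ovl{l})$, whence the claim by monotonicity, Lemma \ref{lem:topbot}(i). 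It is worth noting that, since the truth rules here require $\vphi$ to be a \emph{sentence}, substituting $\vphi$ for $\T\ovl{l}$ can never introduce a free variable, and so can never clash with an eigenvariable of a $(\forall\text{r})$ or $({\rm Q^g}3)$ inference occurring above.

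The only place that genuinely uses the pure-variable convention (Remark \ref{rem:purevar}) and the side condition ``$y$ not among the variables of geometric rules'' is the new clause (ii), the inversion of $(\forall\text{r})$, when the last inference is $({\rm Q^g}3)$ with eigenvariable $y_0$: there $\forall x\vphi$ is a succedent side formula of both premisses, so, choosing the target variable $y$ distinct from $y_0$ and from every geometric-rule variable (licensed by Remark \ref{rem:purevar}), I apply the induction hypothesis to each premiss to obtain $\Gamma,x=0\Ra\Delta_0,\vphi(y)$ and $\Gamma,y_0={\rm S}(x)\Ra\Delta_0,\vphi(y)$, and then re-apply $({\rm Q^g}3)$, whose eigenvariable condition $y_0\notin{\rm FV}(\Gamma,\Delta_0,\vphi(y),x=0)$ follows from the original $y_0\notin{\rm FV}(\Gamma,\Delta,x=0)$ together with $y_0\neq y$. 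I expect this eigenvariable bookkeeping in the $({\rm Q^g}3)$ case to be the only real obstacle; everything else is a mechanical replay of Lemma \ref{lem:invlog}, precisely because the geometric apparatus lives entirely among $\lnat$-atoms and is therefore invisible to the $\T$-complexity measure.
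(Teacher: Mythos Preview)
Your proposal is correct and follows essentially the same approach as the paper: re-run the induction of Lemma \ref{lem:invlog} on the derivation length, observing that the geometric rules act only on $\lnat$-atoms (hence the formula being inverted is always a side formula there), and invoke the pure-variable convention of Remark \ref{rem:purevar} to preserve the eigenvariable condition of $({\rm Q^g}3)$ in the $(\forall\text{r})$-inversion. The paper's own proof is just a two-sentence sketch pointing to exactly these two observations; your write-up is more detailed but not different in substance.
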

	The previous lemmata makes it possible to extend in a straightforward way the $\tau$-admissibility of contraction to $\lptn$.
	\begin{lemma}
		 If $\lptn\sststile{m,k}{n}\Gamma,\vphi^{k_0},\vphi^{k_1}\Ra\Delta$, then $\lptn\sststile{m,k}{n}\Gamma,\vphi\Ra\Delta$ with with $\tau(\vphi)\leq \maxf(\tau(\vphi^{k_0}),\tau(\vphi^{k_1}))$ and in which the complexity of the side formulas does not increase.
			A symmetric claim holds for when the formulas to be contracted appear on the consequent.
	\end{lemma}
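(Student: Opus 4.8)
The plan is to prove both halves of the statement \emph{simultaneously} by induction on the length $n$ of the $\lptn$-derivation, exactly as in the proof of Lemma~\ref{lem:admcon}, and merely to add the cases introduced by the geometric rules of $\qg$. For the base case, if $n=0$ then $\Gamma,\vphi^{k_0},\vphi^{k_1}\Ra\Delta$ is an initial sequent of $\qg$; by the restriction of {\sc (ref)} to atomic formulas of $\lnat$ its principal formula is such a formula and, by Definition~\ref{dfn:tcompl}(i), every formula in it has $\T$-complexity $0$. Hence $\Gamma,\vphi\Ra\Delta$ is again an initial sequent with all $\T$-complexities $0$, which suffices.

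For the inductive step one distinguishes, as usual, according to whether the last inference is applied to one of the occurrences $\vphi^{k_0},\vphi^{k_1}$. If neither is principal, one applies the induction hypothesis to the premiss(es), reapplies the rule, and appeals to monotonicity if needed; here the pure variable convention of Remark~\ref{rem:purevar} guarantees that the eigenvariable conditions of {\sc ($\forall$r)} and $({\rm Q^g}3)$ are not violated when the rule is reapplied. If one of the occurrences is principal, the propositional and quantifier cases are handled exactly as in Lemma~\ref{lem:admcon}: the crucial subcase is $\vphi\equiv\T\ovl{l}$ with $l=\#\psi$, where one first inverts the displayed premiss on the remaining occurrence $\T\ovl{l}$ using the strong $\tau$-invertibility of the truth rules (the Inversion lemma for $\lptn$, analogue of Lemma~\ref{lem:invlog}(i)), obtaining a derivation of $\Gamma,\psi^{p_{00}},\psi^{p_{10}}\Ra\Delta$ of no greater length in which the $\T$-complexity of the second occurrence has either stayed $0$ or strictly dropped; one then applies the induction hypothesis on $\psi$ and reapplies $(\T\text{{\sc l}}^\nat)$, so that $\tau(\T\ovl{l})$ ends up bounded by $\maxf(\tau(\T\ovl{l}^{k_0}),\tau(\T\ovl{l}^{k_1}))$ and the side-formula complexities do not increase. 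The {\sc ($\neg$l)}/{\sc ($\neg$r)} cases are precisely where the simultaneous induction is needed, as in Lemma~\ref{lem:admcon}.

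The genuinely new cases are those in which the last inference is a geometric rule $(=1),(=2)$ or $({\rm Q^g}1)$--$({\rm Q^g}7)$ and one of $\vphi^{k_0},\vphi^{k_1}$ is principal in it. These are unproblematic for two reasons. First, the principal and active formulas of every geometric rule are equations of $\lnat$, hence have $\T$-complexity $0$; so contracting two such occurrences never forces any $\T$-complexity to change, and the bound $\tau(\vphi)\leq\maxf(\tau(\vphi^{k_0}),\tau(\vphi^{k_1}))=0$ is trivially met. Second, the rules $(=2)$ and $({\rm Q^g}1)$--$({\rm Q^g}7)$ already build the relevant repetitions into their premisses --- e.g.\ $(=2)$ carries both $\vphi(s)$ and $\vphi(t)$, and the various $({\rm Q^g}i)$ pass their axiom instance to the premiss --- so a contracted conclusion is obtained by applying the induction hypothesis to the premiss and reapplying the rule, with length unchanged or reduced. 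The only point requiring care is $({\rm Q^g}3)$: here one uses Remark~\ref{rem:purevar} to keep the eigenvariable $y$ distinct from every variable relevant to the contraction, invoking the Substitution lemma for $\lptn$ (which, as noted, leaves $\T$-complexities untouched) to rename $y$ if necessary before reapplying the rule. I expect the bookkeeping for $({\rm Q^g}3)$, together with the routine verification that the geometric rules genuinely do not interact with $\T$-complexity, to be the only --- and minor --- obstacle; everything else transcribes the proof of Lemma~\ref{lem:admcon}.
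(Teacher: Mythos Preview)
Your proposal is correct and follows exactly the route the paper intends: the paper gives no explicit argument for this lemma, stating only that ``the previous lemmata make it possible to extend in a straightforward way the $\tau$-admissibility of contraction to $\lptn$'', and your sketch carries out precisely that straightforward extension of Lemma~\ref{lem:admcon} using the $\lptn$-versions of substitution, weakening, and inversion. Two cosmetic points: in the base case you should cite Definition~\ref{dfn:tcompl}\ref{case:weatc} rather than (i) for the claim that \emph{all} formulas in an initial sequent have $\T$-complexity $0$ (clause (i) only covers $\mc{L}$-formulas, not arbitrary side formulas); and $({\rm Q^g}1)$ is an axiom rather than a rule with premisses, so it belongs to the base case rather than the inductive step --- but your base-case reasoning already handles it.
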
	
	
	With these lemmata at hand, we are then able to prove a reduction lemma in the same vein as the previous section. Noticeably, the interaction between truth, identity, and arithmetical rules is particularly smooth because truth rules only apply to closed terms naming sentences, and therefore no extra-care with variables is needed to deal with cases in which the elimination of a cut on a non-principal truth ascription is obtained by performing the cut on the premisses of a geometric rule. The cut-elimination procedure in the presence of geometric rules does not change the hyperexponential upper-bound. 
	
	\begin{corollary}\label{cor:cuello}
		Cut is eliminable in $\lptn$. 
	\end{corollary}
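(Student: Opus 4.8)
The plan is to reproduce the argument of Section~\ref{sec:lpc} essentially verbatim. First I would prove a Reduction Lemma for $\lptn$ --- the exact analogue of Lemma~\ref{lem:redlpt}, with $\lpc$ replaced throughout by $\lptn$, the same bound $n_0+n_1$ on the length of the conclusion, and the same control on the $\T$-complexities of its formulas --- and then iterate it to drive the cut-rank down to $0$, obtaining $\lptn\sststile{0,k}{2^n_m}\Gamma\Ra\Delta$ with the same hyperexponential bound as the Corollary to Lemma~\ref{lem:redlpt}. All the structural prerequisites are already in place: the Substitution and Weakening lemmata for $\lptn$, the Inversion Lemma for $\lptn$ (which retains the decisive feature that a truth rule strictly raises the $\T$-complexity of its principal formula, so that inversion strictly lowers it), and $\tau$-admissibility of contraction for $\lptn$.

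The Reduction Lemma would be proved by the same triple complete induction on $(l,m,n_0+n_1)$ with $l=\maxf(\tau(\vphi^{l_0}),\tau(\vphi^{l_1}))$ as in Lemma~\ref{lem:redlpt}. The cases in which the cut formula is a logical compound (reducing $m$, using the substitution lemma in the $\forall$-subcase) and the case in which it is a truth ascription $\T\ovl{l}$, $l=\#\psi$, principal on both sides --- where the main induction hypothesis applies to the premisses $\Gamma\Ra\Delta,\psi$ and $\psi,\Gamma\Ra\Delta$ of the truth rules because $\tau(\psi)<\tau(\T\ovl{l})$, so the first coordinate strictly decreases --- are literally those of Section~\ref{sec:lpc}, with the $\lptn$ Inversion Lemma and $\tau$-admissibility of contraction for $\lptn$ in place of their $\lpc$ counterparts. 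The only \emph{genuinely new} cases are: (i) one premiss is an initial sequent, which now means an instance of the $\lnat$-restricted (ref$^-$) or of $({\rm Q^g}1)$ --- if the cut formula is principal there it is an atomic $\lnat$-formula, hence of $\T$-complexity $0$, and is removed by $\tau$-admissibility of contraction, and otherwise the conclusion of the cut is itself an instance of the same initial sequent; and (ii) the cut formula is not principal and the last inference of one premiss is a geometric rule $(=1)$, $(=2)$ or $({\rm Q^g}1)$--$({\rm Q^g}7)$. In case (ii) the active and principal formulas of the geometric rule are atomic $\lnat$-formulas, so the cut formula occurs only as a side formula; following the treatment of geometric rules in \cite{nevo11}, I would weaken the cut formula into the premiss(es) of the geometric rule, invoke the induction hypothesis (the length strictly decreasing, and the $\T$-complexity measure not increasing since $\lnat$-atomic formulas carry $\T$-complexity $0$), and re-apply the geometric rule; the $\T$-complexity bookkeeping in the conclusion is then exactly as in Lemma~\ref{lem:redlpt}.

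The single delicate point --- and the only place where I expect any friction --- is commuting a cut past $({\rm Q^g}3)$, whose eigenvariable must not occur in the formula being weakened in. This is exactly what the pure variable convention of Remark~\ref{rem:purevar} secures, and it is automatic when the cut formula is a truth ascription, since $\T\ovl{l}$ is closed; the $\lptn$ Weakening Lemma is formulated so as to respect this proviso, so the re-application of $({\rm Q^g}3)$ is legitimate with its eigenvariable condition intact (the parallel interaction between $({\rm Q^g}3)$ and a $\forall$-inference is handled by the same convention, as in the $\lptn$ Inversion Lemma). With the Reduction Lemma established, cuts of maximal rank are eliminated one rank at a time exactly as in the proof of the Corollary to Lemma~\ref{lem:redlpt}; since the length bound in the Reduction Lemma is unchanged, so is the hyperexponential upper bound, and taking $m=0$ yields cut-freeness.
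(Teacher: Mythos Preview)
Your proposal is correct and follows essentially the same route as the paper: the paper's argument for Corollary~\ref{cor:cuello} is precisely to lift the Reduction Lemma of \S\ref{sec:lpc} to $\lptn$ using the $\lptn$-versions of substitution, weakening, inversion, and $\tau$-admissibility of contraction already established, noting (as you do) that the only new cases are the geometric rules and that the eigenvariable issue in $({\rm Q^g}3)$ is benign because truth ascriptions $\T\ovl{l}$ are closed. One small slip of phrasing: in case~(ii) what is weakened into the other cut-premiss is the atomic $\lnat$-context introduced by the geometric rule, not the cut formula (which is already present as a side formula in the geometric rule's premiss); but your intended permutation---apply the induction hypothesis to the premiss of the geometric rule against the suitably weakened other cut-premiss, then re-apply the geometric rule---is the right one.
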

	
	 The method outlined in this section straightforwardly extends to geometric rules corresponding to the defining equations of other primitive recursive functions. One could also then strengthen the truth rules, for instance, to pointwise compositional rules such as:
	\[
		\ax{\Gamma\Ra \Delta,\vphi}
		\ax{\Gamma\Ra \Delta,\psi}
			\binf{\Gamma \Ra\Delta,\T(\ovl{l}\subdot\land\ovl{m})}
		\DisplayProof
	\]
with $\#\vphi=l,\#\psi=m$ and $\subdot\land$ the function symbol representing in $\lnat$ the syntactic operation 
	\[\#\vphi,\#\psi\mapsto \#(\vphi\land\psi).\] 

Finally, Corollary \ref{cor:cuello} and subsequent remarks clearly yields conservativity properties of the rules ($\T${\sc l}) and ($\T${\sc r}) over base theories given by geometric axioms. In fact, for $\vphi\in \lnat$, if $\lptn\vdash \,\Ra\vphi$, then there is a cut-free proof $\mc{D}$ of $\Ra\vphi$. All succedents in $\mc{D}$ must be subformulas of $\vphi$, and all formulas in the antecedents must be formulas of $\lnat$, because they are the only ones that may disappear due to geometric and identity rules. Therefore, we have:

\begin{prop}\label{prop:conse}
	$\lptn$ is a conservative extension of $\qg$. 
\end{prop}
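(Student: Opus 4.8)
The plan is to reduce the statement to a subformula analysis of cut\nobreakdash-free $\lptn$\nobreakdash-derivations. What has to be shown is: for every sequent $\Gamma\Ra\Delta$ all of whose formulas belong to $\lnat$, if $\lptn\vdash\Gamma\Ra\Delta$ then $\qg\vdash\Gamma\Ra\Delta$. First I would invoke Corollary \ref{cor:cuello} to obtain a cut\nobreakdash-free $\lptn$\nobreakdash-derivation $\mc{D}$ of $\Gamma\Ra\Delta$. Since the initial sequents and rules of $\lptn$ are exactly those of $\qg$ together with $(\T\text{{\sc r}}^\nat)$ and $(\T\text{{\sc l}}^\nat)$, it then suffices to verify that no formula occurring in $\mc{D}$ contains the predicate $\T$; for then $\mc{D}$ is itself a $\qg$\nobreakdash-derivation.

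Next I would prove, by induction on the height of a cut\nobreakdash-free $\lptn$\nobreakdash-derivation $\mc{E}$, the following refined subformula property: if every formula in the endsequent of $\mc{E}$ is an $\lnat$\nobreakdash-formula, then every formula occurring anywhere in $\mc{E}$ is an $\lnat$\nobreakdash-formula. The base case is immediate, since the only leaves available -- instances of {\sc (ref)} restricted to atomic formulas of $\lnat$, and instances of $({\rm Q^g}1)$ -- contain only $\lnat$\nobreakdash-formulas. For the inductive step I would inspect the last rule $R$ of $\mc{E}$. For the propositional and quantifier rules, every active and side formula in the premisses is (a substitution instance of) a subformula of a formula in the conclusion, hence an $\lnat$\nobreakdash-formula, and the induction hypothesis applies to the immediate subderivations; here it matters that $\T$ is a predicate, so the terms of $\ltnat$ are just the terms of $\lnat$. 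For the identity rules $(=1),(=2)$ and the geometric rules $({\rm Q^g}1)$--$({\rm Q^g}7)$, the formulas that occur in a premiss but not in the conclusion are in each case atomic equations over $\{0,{\rm S},+,\times\}$ -- for instance $t=t$ in $(=1)$, $x=y$ in $({\rm Q^g}2)$, the axiom instances in $({\rm Q^g}4)$--$({\rm Q^g}7)$, and $x=0$, $y={\rm S}(x)$ in $({\rm Q^g}3)$ -- and $(=2)$ is explicitly restricted to atomic $\lnat$\nobreakdash-formulas; so these too are $\lnat$\nobreakdash-formulas. Finally, $R$ cannot be $(\T\text{{\sc r}}^\nat)$ or $(\T\text{{\sc l}}^\nat)$: the conclusion of either rule contains a formula of the form $\T\ovl{l}$, which is not in $\lnat$, contradicting the hypothesis on the endsequent of $\mc{E}$. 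This completes the induction.

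Applying this to $\mc{D}$ yields that every formula in $\mc{D}$ is an $\lnat$\nobreakdash-formula; in particular no truth rule is used, so $\mc{D}$ is a $\qg$\nobreakdash-derivation and $\qg\vdash\Gamma\Ra\Delta$, which is the desired conservativity. Combining this with the lemma equating $\qg$ and ${\rm Q}$, one also reads off that $\lptn$ is conservative over ${\rm Q}$ itself. I do not expect any genuine obstacle here once cut elimination is in place: the only point requiring care is the bookkeeping in the refined subformula property, namely checking rule by rule that none of the geometric or identity rules can reintroduce the truth predicate into a premiss when it is absent from the conclusion. As indicated above this is immediate from the explicit shapes in Definition \ref{dfn:qg}, and it is precisely here that the restriction of $(=2)$ to atomic $\lnat$\nobreakdash-formulas and the concrete arithmetical form of the $({\rm Q^g}i)$ rules are used.
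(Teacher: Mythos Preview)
Your proposal is correct and follows essentially the same route as the paper: invoke cut elimination (Corollary~\ref{cor:cuello}) and then argue, by a subformula analysis of the cut-free derivation, that no formula containing $\T$ can occur anywhere, so the derivation is already a $\qg$-derivation. The paper's argument is terser---it observes that succedents are subformulas of the endformula and that only $\lnat$-formulas can disappear from antecedents via the geometric and identity rules---whereas you spell out the rule-by-rule induction and treat arbitrary $\lnat$-sequents rather than just $\Ra\vphi$, but the substance is the same.
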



	\section{Infinitary rules and semantics}\label{sec:seminf}
	
	In this section we first extend $\lpc$ to an infinitary system $\lptinf$, and then establish the anticipated links between $\lptinf$ and fixed-point semantics. 
	
	\subsection{Infinitary rules} It is convenient to work with an expansion of $\lnat$ with function symbols for primitive recursive functions, which we call $\lpr$. We then in turn denote with $\lprt$ the expansion of $\lpr$ with the predicate $\T$. $\lpr$ will then contain function symbols corresponding to syntactic operations on G\"odel numbers such as $\subdot\land$ above  and
	\begin{align}
		\label{eq:conite}&n\mapsto \#(\T\ovl{n}), && n,m\mapsto \#(\underbrace{\T\ulcorner\ldots\ulcorner\T}_{\text{$m$ $\T$\emph{s}}}\ovl{n}\urcorner\ldots\urcorner)\\
		\notag&n\mapsto \#\ovl{n}  &&\#\vphi(v),\#t\mapsto \#(\vphi(t/v))\\
		\notag&\#\vphi \mapsto \#(\neg\vphi)&& \#\vphi,\#v\mapsto \#(\forall v\vphi)&& \#s,\#t\mapsto \#(s=t)
	\end{align}
	We will employ, respectively, the function symbols $\subdot \T,{\rm tr},{\rm num},{\rm sub},\subdot\neg,\subdot \forall,\subdot =$ to express those operations in our language. On occasion we will make reference to a function symbol ${\rm val}$ for a recursive evaluation function for primitive recursive functions expressing the semantic evaluation function $t\mapsto t^\nat$ taking a closed term and returning its value in the standard model of $\lnat$.
	
	The infinitary system $\lptinf$ is essentially obtained by reformulating  $\lpc$ in $\lprt$, replacing basic truth and falsities with arithmetical truths and falsities, and supplementing the system with an $\omega$-rule. Later on we will also consider the language $\lnat^2$ of second-order arithmetic, extending $\lpr$ with second-order (relational) variables and quantifiers. The presence of the $\omega$-rule makes the {\it length} of derivation, as well as the associated $\T$-complexities, possibly infinite -- more precisely, a countable ordinal. In particular, the definition of $\T$-complexity  needs to be supplemented with the case in which a derivation ends with an application of the infinitary rule. This can be informally described as follows. If a derivation $\mc{D}$  ends with
\begin{eq}
	\ax{\ldots}
	\ax{\gamma^{j_{i_1}}_1,\ldots,\gamma^{j_{i_n}}_n\Ra\delta^{k_{i_1}}_1,\ldots,\delta^{k_{i_m}}_m\vphi(t_i)}
	\ax{\ldots}
		\TrinaryInfC{$\gamma_1,\ldots,\gamma_n\Ra\delta_1,\ldots,\delta_m,\forall x\vphi$}
	\DisplayProof
\end{eq}
then:
\begin{align*}
	& \tau(\gamma_k):= {\rm sup}\{\tau(\gamma^{j_{i_k}}_k) \sth i\in \omega,1\leq k\leq n\},\\
	&\tau(\delta_l):= {\rm sup}\{\tau(\delta^{k_{i_l}}_l) \sth i\in \omega,1\leq l\leq m\},\\
	&\tau(\forall x\vphi):={\rm sup}\{\tau(\vphi(t))\sth \text{ $t$ a closed term of $\lpr$}\}.
\end{align*}

	Here's the official definition of the infinitary system $\lptinf$:
	\begin{dfn}[$\lptinf$]
$\lptinf$ is obtained from $\lpc$ by:
	\begin{itemize}\setlength\itemsep{1em}
		\item  Omitting free variables.
		%
		%
		\item Replacing the axioms {\sc ($\top$)}, {\sc ($\bot$)} with\\
			\begin{itemize}
				\item[$(\mbb{T})$] $\sststile{m,\beta}{\alpha}\Gamma\Ra r=s,\Delta$  for any $\alpha,\beta,m$ and with $r^\nat=s^\nat$; \\
					 \item[$(\sc \mbb{F})$] $\sststile{m,\beta}{\alpha}\Gamma,r=s\Ra\Delta$  for any $\alpha,\beta,m$ and with $r^\nat\neq s^\nat$.
			\end{itemize}
		\item Replacing {\sc $\T$l} and {\sc $\T$r} with the more general:
		\\
		\begin{itemize}
			\item[$(\T\text{{\sc r}}^\nat)$] If $\lptn\sststile{m,\beta}{\alpha}\Gamma \Ra\vphi,\Delta$, then $\lptn\sststile{m,\delta}{\gamma}\Gamma\Ra \T t,\Delta$, with $\alpha<\gamma, \beta<\delta$, $t^\nat=\#\vphi$, $\tau(\T t)=\tau(\vphi)+1$, and the $\T$-complexities of the side formulas are unchanged. 
			\item[$(\T\text{{\sc l}}^\nat)$] If $\lptn\sststile{m,\beta}{\alpha}\Gamma,\vphi \Ra\Delta$, then $\lptn\sststile{m,\delta}{\gamma} \Gamma,\T t\Ra\Delta$, with $\alpha<\gamma, \beta<\delta$, $t^\nat=\#\vphi$, $\tau(\T t)=\tau(\vphi)+1$, and the $\T$-complexities of the side formulas are unchanged.
		\end{itemize}
%
		%
		\item Replacing {\sc ($\forall$r)} with:\\
			\begin{itemize}
				\item[$(\omega)$] If for all $t$ there are $\alpha<\gamma$ and $\beta\leq\delta$ such that $\sststile{m,\beta}{\alpha}\Gamma \Ra \vphi(t),\Delta$, then $\sststile{m,\delta}{\gamma} \Gamma \Ra \forall x\vphi,\Delta$, with $\tau(\forall x\vphi)={\rm sup}\{\tau(\vphi(t))\sth \text{$t$ a term of $\lpr$}\}$.
			\end{itemize}

	\end{itemize}
\end{dfn}

	\bigskip
	\begin{remark}\label{rem:lpi}
	\hfill
		\begin{enumeratei}
			%
			\item The general formulation of $(\T\text{{\sc r}}^\nat)$ and $(\T\text{{\sc l}}^\nat)$ is essential for the claims below. It allows transfinite iterations of applications of $\T$, which are otherwise not available, even in the presence of the $\omega$-rule. This can be easily seen by considering the function representing the rightmost operation in \eqref{eq:conite}, which we call ${\rm tr}(n,t)$. 
		\[
			\lptinf\vdash \T({\rm tr}(\ovl{n},\corn{0=0}))\;\;\text{for any $n\in \omega$.}
		\]
		The $\omega$-rule then gives us transfinite iterations of $\T$. This process, of course, carries on for further recursive ordinals by carefully choosing syntactic operations akin to ${\rm tr}(\cdot)$.
			\item As before, $\lpcinf$ proves identity sequents $\Gamma,\vphi\Ra \vphi,\Delta$ for all $\vphi\in \lpr$. 
		\end{enumeratei}
	\end{remark}

Then the cut-elimination strategy proceeds with only minor variations. We have:

\begin{lemma}\hfill \label{lem:inffac}
	\begin{enumeratei}	
		\item\label{weainf} (Weakening) If $\lptinf\sststile{m,\beta}{\alpha} \Gamma\Ra \Delta$, then $\lptinf\sststile{m,\beta}{\alpha}\Gamma_0,\Gamma\Ra \Delta_0,\Delta$ with all formulas in $\Delta_0, \Gamma_0$ featuring minimal $\T$-complexity. 
		\item \label{invinf} (Inversion) All rules shared by $\lptinf$ and $\lpc$ are length-, and $\tau$-invertible as prescribed by Lemma \ref{lem:invlog}, \ref{invuno}-\ref{invsei}. Moreover:
			\begin{quotation}
				if $\lptinf\sststile{m,\beta}{\alpha} \Gamma\Ra\Delta,\forall x\vphi$, then $\lptinf\sststile{m,\beta}{\alpha} \Gamma\Ra\Delta,\vphi(t)$, for  any  closed term $t$. In addition, $\tau(\vphi(t_i))\leq \tau(\forall x\vphi)$ and the $\T$-complexity of the formulas in the contexts of the inverted sequents is unchanged. 
			\end{quotation}
		\item Contraction is $\tau$-preserving and length-preserving admissible in $\lptinf$.
	\end{enumeratei}
\end{lemma}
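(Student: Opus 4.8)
The plan is to prove the three items in the order in which they are stated, by induction on the length $\alpha$ of the derivation, mirroring as closely as possible the finitary arguments of Lemmata \ref{lem:invlog} and \ref{lem:admcon}, but with the inductions now running over countable ordinals and with the $\omega$-rule added as a new case. For weakening \ref{weainf}, I would argue by transfinite induction on $\alpha$: the new axioms $(\mbb{T})$ and $(\sc\mbb{F})$ absorb arbitrary contexts directly (the weakened formulas — arithmetical equations or otherwise — receive $\T$-complexity $0$ by the base-language clause of Definition \ref{dfn:tcompl}); the case of $(\omega)$ is handled by applying the induction hypothesis to each of the countably many premisses $\Gamma\Ra\vphi(t),\Delta$, then reapplying $(\omega)$, noting that the supremum defining $\tau(\forall x\vphi)$ is taken over exactly the same family of closed terms and so is unchanged, while the weakened-in formulas keep minimal $\T$-complexity; the cases for the rules shared with $\lpc$ are literally those of Lemma \ref{lem:invlog}(ii).

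For inversion \ref{invinf}, the rules shared with $\lpc$ are treated by the same induction on $\alpha$ as in Lemma \ref{lem:invlog}, with one extra case: when the last inference is $(\omega)$ and the formula being inverted is a side formula, one inverts in every premiss and reapplies $(\omega)$. The genuinely new clause is inversion for $(\omega)$ itself: if $\lptinf\sststile{m,\beta}{\alpha}\Gamma\Ra\Delta,\forall x\vphi$ then, tracing the derivation, either $\forall x\vphi$ is principal in the last inference — in which case the required premiss $\Gamma\Ra\Delta,\vphi(t)$ is literally one of the premisses of that $(\omega)$ application, with the length strictly smaller and $\tau(\vphi(t))\leq\tau(\forall x\vphi)$ by the supremum clause — or $\forall x\vphi$ is a side formula, and one applies the induction hypothesis to the immediate subderivation(s) and reapplies the last rule, using again that suprema over the fixed term-family are preserved. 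The crucial point, exactly as in the finitary setting, is that the strong $\tau$-invertibility of the truth rules of Lemma \ref{lem:invlog}(i) survives verbatim for $(\T\text{{\sc r}}^\nat)$, $(\T\text{{\sc l}}^\nat)$: since those rules carry exactly the same $\tau$-bookkeeping ($\tau(\T t)=\tau(\vphi)+1$) as their finitary counterparts, and since initial sequents remain restricted to the truth-free fragment, the induction of the proof of Lemma \ref{lem:invlog}\ref{invuno} goes through with $n$ replaced by $\alpha$.

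For $\tau$-admissibility of contraction, I would again run the simultaneous transfinite induction on $\alpha$ over the two symmetric statements, exactly as in Lemma \ref{lem:admcon}. The base case is the axioms $(\mbb{T})$, $(\sc\mbb{F})$ — where all formulas have $\T$-complexity $0$, so contraction is trivial; the step cases where neither contracted occurrence is principal follow from the induction hypothesis and possibly the monotonicity of $\lptinf\sststile{m,\beta}{\alpha}$; the case where one occurrence is principal and the contracted formula is $\T t$ is handled precisely as in Lemma \ref{lem:admcon}, invoking the strong invertibility just established in \ref{invinf} to reduce the $\T$-complexity of one occurrence below the other before merging; and the new $(\omega)$ case — where a principal $\forall x\vphi$ is being contracted against a side occurrence of $\forall x\vphi$ — is dispatched by inverting the side occurrence (by \ref{invinf}) uniformly at every closed term, contracting termwise inside each premiss by the induction hypothesis, and reapplying $(\omega)$. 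The main obstacle, as in the finitary case, is the interaction of $\T$-complexity with contraction on truth ascriptions: one must be sure that inverting the principal $\T t$ genuinely \emph{lowers} its $\tau$-value (strictly, when it is positive) so that, after inversion of the companion occurrence, the two $\tau$-values can be combined by $\maxf$ without exceeding $\maxf(\tau(\T t^{p_0}),\tau(\T t^{p_1}))$; this is exactly what Lemma \ref{lem:invlog}\ref{invuno}, now transported to $\lptinf$, delivers, so no new idea beyond careful ordinal bookkeeping is required.
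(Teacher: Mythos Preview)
Your proposal is correct and matches the paper's approach: the paper gives no explicit proof of Lemma~\ref{lem:inffac}, simply noting that ``the cut-elimination strategy proceeds with only minor variations'', and what you have written is precisely a faithful unpacking of those minor variations---transfinite induction on $\alpha$ replacing induction on $n$, with the $(\omega)$-rule handled by applying the induction hypothesis premisewise and using the supremum clause for $\T$-complexity. One tiny correction: in the base case of weakening, the reason the weakened-in formulas receive $\T$-complexity $0$ is clause~\ref{case:weatc} of Definition~\ref{dfn:tcompl} (side formulas in initial sequents have $\tau=0$), not the base-language clause~(i), since the weakened formulas need not lie in $\mc{L}$.
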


In particular, the reduction lemma generalizes to ordinals in the expected way. It is obviously important to employ ordinal addition in the induction to deal with cuts on principal formulas of $\omega$-rules. 

\begin{prop}\label{prop:ceinf}
	If $\lpcinf\sststile{m,\beta}{\alpha_0}\Gamma \Ra \Delta, \vphi$ and $\lpcinf\sststile{m,\beta}{\alpha_1}\vphi,\Gamma \Ra \Delta$, then $\lpcinf\sststile{m,\beta}{\alpha_0+\alpha_1}\Gamma \Ra \Delta$. Therefore,
	\begin{center}
		If $\lptinf\sststile{m,\beta}{\alpha}\Gamma\Ra \Delta$, then $\lptinf \sststile{0,\beta}{}\Gamma\Ra \Delta$.
	\end{center}
\end{prop}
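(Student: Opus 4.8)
The plan is to lift the proof of the finitary Reduction Lemma (Lemma~\ref{lem:redlpt}) to the transfinite, running the \emph{same} multiple, complete induction but now on triples $(l,m,\alpha_0+\alpha_1)$ ordered lexicographically, where $l=\maxf(\tau(\vphi^{l_0}),\tau(\vphi^{l_1}))$ is a countable ordinal and $\alpha_0,\alpha_1$ are the (countable) lengths of the two given derivations. The structural work is done entirely by the infinitary analogues already collected in Lemma~\ref{lem:inffac}: length- and $\tau$-preserving weakening, $\tau$-invertibility of every rule including the $\omega$-rule, and length- and $\tau$-preserving admissibility of contraction. As in the finitary proof, one first disposes of the cases in which one premiss-sequent is an axiom --- now $(\mbb{T})$, $(\mbb{F})$ or $(\textsc{ref}^-)$ --- where either $\Gamma\Ra\Delta$ is already an axiom, when the cut formula is not the one responsible for axiomhood, or the relevant atom or equation is contracted away by Lemma~\ref{lem:inffac}(iii). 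Next come the cases in which $\vphi$ is not principal in the last inference of one of the two derivations: the cut is permuted upward through that inference and the induction hypothesis is applied to the subderivations, which are strictly smaller in the $\alpha_0+\alpha_1$ coordinate. Finally, when $\vphi$ is principal in both: $\vphi\equiv\T t$ is handled by cutting on the unique sentence $\psi$ with $t^\nat=\#\psi$, where $\tau(\psi)<\tau(\T t)$ because $\tau(\T t)=\tau(\psi)+1>0$ and so $l$ strictly decreases; $\vphi\equiv\neg\psi$ and $\vphi\equiv\psi\land\chi$ are verbatim as before; and $\vphi\equiv\forall x\psi$ is handled by first removing the quantifier from the $(\forall\textsc{l})$-premiss via inversion and then cutting on an instance $\psi(s)$ of strictly smaller logical complexity.

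The one genuinely new ingredient concerns the $\omega$-rule. If the last inference of, say, the left derivation is an $(\omega)$-inference with $\vphi$ among its side formulas, then the induction hypothesis is applied to each of its $\omega$-many premisses $\Gamma\Ra\chi(t),\Delta',\vphi$ against the (weakened) right premiss, yielding $\Gamma\Ra\chi(t),\Delta'$ for every closed term $t$, and $(\omega)$ is then re-applied, followed by a contraction. If instead $\vphi\equiv\forall x\chi$ is the \emph{principal} formula of that $(\omega)$-inference --- so on the other side it was introduced by $(\forall\textsc{l})$ from a premiss $\Gamma,\forall x\chi,\chi(s)\Ra\Delta$ --- one combines, in a carefully chosen order, a cut on $\forall x\chi$ of strictly smaller length with a cut on the instance $\chi(s)$ of strictly smaller logical complexity, each licensed by the induction hypothesis. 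The $\T$-complexity bookkeeping is immediate from the clause $\tau(\forall x\chi)=\sup_t\tau(\chi(t))$ attached to $(\omega)$, and since weakening, inversion and contraction are all $\tau$-non-increasing, the closing clause of the statement --- that the occurrences in $\Gamma\Ra\Delta$ have $\T$-complexity bounded by the maximum of their corresponding occurrences in the two assumptions --- follows exactly as in Lemma~\ref{lem:redlpt}.

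I expect the genuine obstacle to be the ordinal arithmetic in these $\omega$-cases. Permuting a cut through an $\omega$-rule produces a derivation of length about $\sup_t(\alpha_0(t)+\alpha_1)+1$, and one must check that this does not exceed $\alpha_0+\alpha_1$; this goes through because ordinal addition absorbs suprema in its left argument, so that $\sup_t(\alpha_0(t)+\alpha_1)\le(\sup_t\alpha_0(t))+\alpha_1\le\alpha_0+\alpha_1$, the potentially troublesome degenerate situations (such as $\alpha_1=0$) having been routed through the axiom cases beforehand. It is exactly this left/right asymmetry that forces the bound to be stated as the non-commutative sum $\alpha_0+\alpha_1$. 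Granting the Reduction Lemma, the displayed cut-elimination consequence follows by the usual two nested inductions: an induction on the length shows that any $\lptinf$-derivation of cut-rank $\le m+1$ can be converted into one of cut-rank $\le m$ by replacing each topmost application of $(\textsc{cut})$ of rank $m+1$ using the Reduction Lemma, and iterating $m+1$ times down to $0$ produces a cut-free derivation. Crucially, the Reduction Lemma never raises the $\T$-complexity, so the resulting cut-free derivation still has $\T$-complexity $\le\beta$ --- which is precisely the feature exploited in the subsequent connection with fixed-point semantics.
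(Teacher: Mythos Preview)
Your overall plan—lifting Lemma~\ref{lem:redlpt} to a transfinite lexicographic induction on $(l,m,\alpha_0+\alpha_1)$ and invoking Lemma~\ref{lem:inffac} throughout—matches what the paper intends by ``the reduction lemma generalizes to ordinals in the expected way.'' But there is a genuine gap in your treatment of the non-principal case. You write that after permuting the cut through whichever side has $\vphi$ non-principal, ``the subderivations \ldots\ are strictly smaller in the $\alpha_0+\alpha_1$ coordinate.'' This is guaranteed only when you permute through the \emph{right} premiss, because ordinal addition is strictly monotone in its right argument but merely weakly monotone on the left. Your own $(\omega)$-case exhibits the failure: take $\alpha_0=\omega+1$, $\alpha_1=\omega$, and a left-side $(\omega)$-premiss of length $\alpha_0(t)=\omega<\alpha_0$; then $\alpha_0(t)+\alpha_1=\omega+\omega=(\omega+1)+\omega=\alpha_0+\alpha_1$, so the induction hypothesis is unavailable, and after re-applying $(\omega)$ the length overshoots $\alpha_0+\alpha_1$. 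Your inequality $\sup_t(\alpha_0(t)+\alpha_1)\le(\sup_t\alpha_0(t))+\alpha_1$ is correct but delivers only $\le$, never the strict $<$ you need; the escape clause about ``$\alpha_1=0$ routed through the axiom cases'' does not cover this example. (Incidentally, addition is continuous in its \emph{right} argument, $\alpha+\sup_i\beta_i=\sup_i(\alpha+\beta_i)$; there is no comparable left-argument property.)

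The repair—and this is the standard Sch\"utte manoeuvre the paper is gesturing at—is to run the case analysis uniformly through the $\alpha_1$-derivation. If $\vphi$ is not principal in its last inference, permute there and apply the induction hypothesis with $\alpha_0+\alpha_1'<\alpha_0+\alpha_1$. If $\vphi$ \emph{is} principal there, do not inspect the last rule of the $\alpha_0$-side at all; instead use the $\tau$- and length-preserving inversion of Lemma~\ref{lem:inffac} on the $\alpha_0$-derivation to extract the relevant subformula premiss(es) with length $\le\alpha_0$, and then: for $\vphi\equiv\T t$, cut on $\psi$ via the induction hypothesis with strictly smaller $l$ and with second length $\alpha_1'<\alpha_1$; for logical $\vphi$, perform the residual lower-rank cut via the cut \emph{rule} itself (its rank is $\le m$), which adds only $1$ to the length and so stays within $\alpha_0+\alpha_1'+1\le\alpha_0+\alpha_1$. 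This is precisely why the bound must be stated as the non-commutative $\alpha_0+\alpha_1$ rather than $\alpha_1+\alpha_0$: the side whose length sits on the \emph{right} of the sum is the one you are allowed to descend into.
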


In light of Proposition \ref{prop:ceinf}, one can employ $\lpcinf$ to establish the consistency, via appropriate embeddings, of finitary extensions of $\lptn$ with induction axioms.

	\subsection{Fixed-point models}
	
	We relate the infinitary system introduced above and a fixed-point model for $\lt$. The results of this section are intended to compare to \cite{figr18} and, less directly, to \cite{can90}. We show that the form of invertibility allowed by $\lptinf$ enables one to preserve fully disquotational truth rules \emph{with context} while capturing Kripkean grounded truth. In the references mentioned, this could only be achieved by disallowing contexts altogether from the premisses of ($\T${\sc l}) and ($\T${\sc r}): this, in turn, would render such rules less obviously `truth rules', but open to other interpretations such as the ones based on provability.
	
	Consider the following formula of the language $\lnat^2$ of second-order arithmetic, where ${\rm CT}(x)$ and ${\rm Sent}_{\lprt}$ are primitive recursive predicate expressing the notions of being a closed term and a sentence of $\lprt$ respectively:
		\begin{align*}
			{\rm K}(X,x) :\lra \;& {\rm Sent}_{\lprt}(x)\;\land\\
			&\exists y\exists z({\rm Ct}_{\lprt}(y)\land {\rm Ct}_{\lprt}(z)\land x=(y\subdot = z)\land {\rm val}(y)={\rm val}(z)) \;\vee\\ 
			& \exists y\exists z({\rm Ct}_{\lprt}(y)\land {\rm Ct}_{\lprt}(z)\land x=\subdot \neg(y\subdot = z)\land {\rm val}(y)\neq{\rm val}(z)) \;\vee\\ 
			&\exists y({\rm CT}(y)\land x={\rm sub}(\corn{\T v},\corn{v},y)\land {\rm val}(y)\in X)\;\vee\\
			&\exists y({\rm CT}(y)\land x={\rm sub}(\corn{\neg\T v},\corn{v},y)\land (\subdot\neg{\rm val}(y))\in X)\;\vee\\
			&\exists y({\rm Sent}_{\lprt}(y)\land x=(\subdot\neg\subdot\neg y)\land y\in X)\;\vee\\
			&\exists y\exists z({\rm Sent}_{\lprt}(x)\land x=(y\subdot\land x)\land  y\in X\land z\in X)\;\vee\\
			& \exists y\exists z({\rm Sent}_{\lprt}(x)\land x=(\subdot \neg(y\subdot\land z))\land (\subdot\neg y)\in X \vee (\subdot\neg z)\in X)\;\vee\\
			& \exists v\exists y({\rm Sent}_{\lprt}(x)\land x=(\subdot \forall vy)\land \forall u({\rm CT}(u)\ra {\rm sub}(x,v,u)\in X))\;\vee\\
			& \exists v\exists y({\rm Sent}_{\lprt}(x)\land x=(\subdot\neg\subdot\forall vy)\land \exists u({\rm CT}(u)\land \subdot\neg{\rm sub}(x,v,u)\in X)).
		\end{align*}
		By inspection of ${\rm K}(X,x)$, it is clear that $X$ occurs positively in it, in the sense that it does not contain occurrences of $u\notin X$, and essentially so.\footnote{More precisely, this means that we can translate ${\rm K}(X,x)$ in a Tait-language and no occurrences of $u\notin X$ are present. } We define an operator ${\varPhi_{\rm K}} \colon \mbb{P} (\omega)\to \mbb{P}(\omega)$ as follows:
		\[
			{\varPhi_{\rm K}}(S):=\{ n \sth (\nat, S) \vDash {\rm K}(X,\ovl{n})\},
		\]
		where $(\nat,S)$ expresses that $S$ is used to interpret the variable $X$. Since ${\rm K}(X,x)$ is $X$-positive, ${\varPhi_{\rm K}}$ is monotone:
		\begin{equation}\label{eq:monot}
			S_0\subseteq S_1 \text{ only if } {\varPhi_{\rm K}}(S_0)\subseteq {\varPhi_{\rm K}}(S_1).
		\end{equation}
	By transfinite recursion, one then sets: 
	\begin{align*}
		&{\varPhi_{\rm K}}^{\alpha}:={\varPhi_{\rm K}}({\varPhi_{\rm K}}^{<\alpha}),\text{ with }{\varPhi_{\rm K}}^{<\alpha}:=\bigcup_{\beta<\alpha}{\varPhi_{\rm K}}^\beta.
	\end{align*}
	It is clear that ${\varPhi_{\rm K}}$ has \emph{fixed points}, i.e.~there are ordinals $\gamma$ such that ${\varPhi_{\rm K}}^{<\gamma}={\varPhi_{\rm K}}^\gamma$.\footnote{Otherwise, $\{n\sth n\in {\varPhi_{\rm K}}^\alpha\setminus {\varPhi_{\rm K}}^{<\alpha}\}$ would be a subset of $\nat$ of cardinality $\aleph_1$.} We let $\kappa_K={\rm min}\{\alpha \sth {\varPhi_{\rm K}}^{<\alpha}={\varPhi_{\rm K}}^\alpha\}$, and ${\rm I}_{{\varPhi_{\rm K}}}:={\varPhi_{\rm K}}^{<\alpha}$. ${\rm I}_{\varPhi_{\rm K}}$ is called the \emph{minimal fixed point} of ${\varPhi_{\rm K}}$. It is well-known (see e.g.~\cite[Ch.~6]{poh09}), that $\kappa_K=\omega^{\rm CK}_1$. 
	
	${\rm I}_{\varPhi_{\rm K}}$ is well-known for capturing the concept of grounded truth \cite{kri75}, because any $\#\vphi\in \mfp{\varPhi_{\rm K}}$ is either a true atomic sentence of $\lpr$ or an atomic truth appears in its `dependency' structure \cite{lei05}. We will briefly return on the connection between $\lptinf$ and grounded truth shortly.

	For $n\in \mfp{{\varPhi_{\rm K}}}$, its \emph{inductive norm} is defined as:
		\[
			|n|_{\varPhi_{\rm K}}:= {\rm min}\{ \alpha \sth n\in {\varPhi_{\rm K}}^\alpha\}.
		\]
	We also have:
		\begin{eq}
			n\in \mfp{\opk} \;\text{ iff }\; \forall X(\forall x({\rm K}(X,x)\ra x\in X)\ra n\in X),
		\end{eq}
		so $\mfp{\opk}$ is $\Pi_1^1$-definable in $\lnat^2$. As noticed by \cite{kri75} (see also \cite{bur86}), $\mfp{\opk}$ is $\Pi^1_1$-complete.\footnote{The idea of the proof: one can uniformly replace $y\in X$ by ${\rm Tr}\,{\rm sub}(u,\corn{v},{\rm num}(y))$ in $P(x,X)$ -- an arbitrary inductive definition -- to obtain $P'(x,u)$. The diagonal lemma then yields a formula $\xi(v)$ such that
		\[
			(\nat,X)\vDash \xi(v)\;\text{ iff }\;(\nat,X)\vDash P'(x,\corn{\xi(v)}).
		\]
		Finally, one shows by transfinite induction on the generation of the minimal fixed point $\mfp{P}$ that $n\in {\rm I}_P$ if and only if $(\nat,\mfp{\opk})\vDash \T\corn{\xi(\dot{n})}$.}
		
		The strict relationships between $\Pi^1_1$-sets and infinitary cut-free calculi are secured by general results \cite[\S6.6]{poh09}. To witness the link between $\lptinf$ and $\mfp{\opk}$, we establish a direct correspondence between the two frameworks. The existence of a nice semantics for $\lpc$-based systems will then immediately follow.

\begin{lemma}\label{lem:grocon}
	If $\lptinf \sststile{0,\beta}{\alpha}\Gamma \Ra \Delta$, then \emph{either} there is a $\gamma \in \Gamma$ with $|\#\neg\gamma|_{\varPhi_{\rm K}}\leq\alpha$ \emph{or} there is a $\delta\in \Delta$ with $|\#\delta|_{\varPhi_{\rm K}}\leq\alpha$.
\end{lemma}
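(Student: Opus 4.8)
The natural strategy is induction on the length $\alpha$ of the cut-free derivation witnessing $\lptinf\sststile{0,\beta}{\alpha}\Gamma\Ra\Delta$, proving the statement uniformly for all sequents. I would first dispose of the axiom cases: if $\Gamma\Ra\Delta$ is an instance of $(\mbb{T})$, then some $\delta\in\Delta$ is $r=s$ with $r^\nat=s^\nat$, whence $\#\delta=\#(r=s)\in\varPhi_{\rm K}^{1}$ (it satisfies the first disjunct of ${\rm K}(X,\ovl{n})$ vacuously in $X$), so $|\#\delta|_{\varPhi_{\rm K}}\leq 1\leq\alpha$ for any $\alpha\geq 1$ (and if $\alpha=0$ the sequent is an axiom so one checks the norm is $0$ under the convention, or $\leq\alpha$ directly using that ${\varPhi_{\rm K}}^{<\alpha}$ begins populating at stage $1$ — I would state the stage bookkeeping carefully so the $\alpha=0$ corner case is handled). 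Dually for $(\sc\mbb{F})$: some $\gamma\in\Gamma$ is $r=s$ with $r^\nat\neq s^\nat$, so $\#\neg\gamma=\subdot\neg(r\subdot = s)$ satisfies the second disjunct of ${\rm K}$. The case {\sc (ref$^-$)} with an atomic $\mc{L}$-formula $\vphi$ appearing on both sides cannot actually arise for nonlogical atoms in $\lptinf$ since basic atoms have been replaced by identities, but if it does in the form $\vphi\equiv r=s$ one of the two previous computations applies.

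\textbf{Inductive step.} For each rule I would push the induction hypothesis through the premiss(es) and then verify that the corresponding disjunct of ${\rm K}(X,x)$ lets the inductive norm grow by at most the amount the derivation length grew. The logical rules are routine: for $(\neg{\rm r})$ with conclusion $\Gamma\Ra\neg\vphi,\Delta$ from $\Gamma,\vphi\Ra\Delta$, the IH gives either some $\gamma\in\Gamma$ with $|\#\neg\gamma|\leq\alpha_0<\alpha$ — done — or $|\#\neg\vphi|\leq\alpha_0$, but $\#\neg\vphi$ is exactly the object we want a norm bound for in $\Delta$; and the $\delta\in\Delta$ subcase is immediate. For $(\neg{\rm l})$, $(\land{\rm l})$, $(\land{\rm r})$, $(\forall{\rm l})$ one uses the clauses of ${\rm K}$ for $\subdot\neg\subdot\neg$, for $\subdot\land$ and its negation, and for $\subdot\forall$ and its negation; in the $(\land{\rm r})$ and binary cases I would take a sup of the two premiss norms and use monotonicity \eqref{eq:monot}. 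The $(\omega)$-rule is the characteristic infinitary case: from $\sststile{0,\beta}{\alpha_t}\Gamma\Ra\vphi(t),\Delta$ for every closed term $t$ with $\alpha_t<\gamma$, the IH yields for each $t$ either a fixed $\gamma_0\in\Gamma$ with a norm bound — but here one must be slightly careful, since a priori the witnessing $\gamma_0$ or $\delta_0$ could differ with $t$; the standard fix is: if for \emph{some} $t$ the witness lies in $\Gamma$ or in $\Delta$ (the fixed context), we are done with norm bound $\sup_t\alpha_t\leq\gamma$; otherwise for \emph{every} $t$ the witness is $\vphi(t)$ itself, i.e.\ $|\#\vphi(t)|_{\varPhi_{\rm K}}\leq\alpha_t$, and then the $\subdot\forall$-clause of ${\rm K}$ gives $\#\forall x\vphi\in{\varPhi_{\rm K}}^{\,\gamma}$ with $|\#\forall x\vphi|_{\varPhi_{\rm K}}\leq\sup_t\alpha_t+1\leq\gamma$.

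\textbf{The truth rules and the main obstacle.} The cases $(\T{\rm r}^\nat)$ and $(\T{\rm l}^\nat)$ are where the distinctive feature of $\lptinf$ — invertible truth rules \emph{with context} — must be exploited, and I expect this to be the crux. For $(\T{\rm r}^\nat)$, from $\sststile{0,\beta}{\alpha_0}\Gamma\Ra\vphi,\Delta$ with $\alpha_0<\alpha$ and $t^\nat=\#\vphi$, the IH gives either a witness already in $\Gamma$ or $\Delta$ (done, with room to spare since $\alpha_0<\alpha$), or $|\#\vphi|_{\varPhi_{\rm K}}\leq\alpha_0$; in the latter case I would use the $\T$-clause of ${\rm K}$, namely $x={\rm sub}(\corn{\T v},\corn{v},y)$ with ${\rm val}(y)=\#\vphi\in X$, together with $t^\nat=\#\vphi={\rm val}(\corn{\#\vphi})$, to conclude $\#\T t\in{\varPhi_{\rm K}}^{\,\alpha_0+1}$, hence $|\#\T t|_{\varPhi_{\rm K}}\leq\alpha_0+1\leq\alpha$. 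The subtle point — and this is the genuine obstacle — is $(\T{\rm l}^\nat)$ on the antecedent: from $\sststile{0,\beta}{\alpha_0}\Gamma,\vphi\Ra\Delta$ we want information about $\#\neg\T t$, i.e.\ about $\#\neg\T t=\subdot\neg$ applied to something, and the IH applied to the premiss speaks of $\#\neg\vphi$ (if $\vphi$ is the witness on the left) or of a context formula. Here one needs the clause of ${\rm K}$ of the shape $x={\rm sub}(\corn{\neg\T v},\corn{v},y)\land(\subdot\neg{\rm val}(y))\in X$: if the premiss-witness is $\vphi$ itself, so $|\#\neg\vphi|_{\varPhi_{\rm K}}\leq\alpha_0$, then taking $y$ with ${\rm val}(y)=\#\vphi$ makes $\subdot\neg{\rm val}(y)=\#\neg\vphi\in{\varPhi_{\rm K}}^{<\alpha_0+1}$, so $\#\neg\T t\in{\varPhi_{\rm K}}^{\,\alpha_0+1}$ and $|\#\neg\T t|_{\varPhi_{\rm K}}\leq\alpha_0+1\leq\alpha$, as required. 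I would therefore organise the whole induction around the slogan: for each rule, the IH's two-way disjunction ("witness in the genuine context" vs.\ "witness is the active formula") matches exactly one ${\rm K}$-clause for the principal formula (or its negation), and the stage count goes up by one precisely when the derivation length does. The only real care needed is the $(\omega)$-case uniformity argument above and making sure that the cut-freeness hypothesis ($m=0$, guaranteed by Proposition \ref{prop:ceinf}) is what licenses omitting any cut case — there is no cut clause in ${\rm K}$, which is exactly why cut-freeness is essential here.
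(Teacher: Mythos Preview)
Your proposal is correct and follows essentially the same transfinite-induction strategy as the paper's proof, matching each rule of $\lptinf$ to the corresponding clause of ${\rm K}$. Two minor corrections: the base-case norms are $0$, not $1$, since the first two disjuncts of ${\rm K}$ do not mention $X$ and hence true/false closed identities already lie in $\varPhi_{\rm K}^{0}=\varPhi_{\rm K}(\varnothing)$, so your $\alpha=0$ worry dissolves; and the truth-rule cases here do not in fact exploit invertibility --- the induction hypothesis on the premiss suffices directly, just as in the paper's treatment of $(\T\text{\sc r}^\nat)$ --- invertibility is needed rather for the converse direction (Lemma~\ref{lem:mfpsys}).
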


\begin{proof}
	The proof is by transfinite induction on $\alpha\leq \omega_1^{\rm CK}$. 
	
	If $\alpha=0$, then the claim follows by definition for $(\sc \mbb{T})$ and $(\sc \mbb{F})$, or from the fact that closed atomic identities of $\lprt$ are decided by $\mfp{\varPhi}$. 
	
	If $\alpha$ is successor or limit the claim follows by inductive hypothesis by reflecting on the fact that the disjuncts in ${\rm K}$ harmonize well with the rules of $\lpcinf$. For instance, if $\Gamma \Ra \Delta$ is proved by an application of $(\T\text{{\sc r}}^\nat)$, we have 
	\[
		\sststile{0,\beta_0}{\alpha_0} \Gamma \Ra \Delta_0,\vphi
	\]
	with $\beta_0\leq \beta,\alpha_0\leq \alpha$, $\tau(\vphi)<\beta$. If some $\gamma \in \Gamma$ is such that $|\#\neg\gamma|_{\varPhi_{\rm K}}\leq \alpha_0$, or some $\delta_0\in \Delta_0$ is such that $|\#\delta_0|_{\varPhi_{\rm K}}\leq \alpha_0$, we are done by the definition of $\varPhi^{\alpha}$. Otherwise, by induction hypothesis, we have that $|\#\vphi|_{\varPhi_{\rm K}}\leq \alpha_0$ and therefore, for $t^\nat=\#\vphi$,  $|\T t|_{\varPhi_{\rm K}}\leq \alpha$. 
	
	Noticeably, even though ${\rm K}$ does not feature a full clause for negation, $\mfp{\opk}$ can still capture their behaviour in the absence of initial sequents. Suppose for instance that $\Gamma \Ra \Delta$ is such that $\Delta=\Delta_0,\neg\vphi$ and $\Gamma \Ra \Delta$ is obtained with an application of ({\sc $\neg$r}), so that 
	\[
		\sststile{0,\beta_0}{\alpha_0}\Gamma,\vphi \Ra \Delta
	\]
	with $\beta_0\leq \alpha_0<\alpha$, $\alpha_0\leq \alpha$. By induction hypothesis, either $|\#\neg\gamma|_{\varPhi_{\rm K}}=\alpha_0$ for some $\gamma\in \Gamma$, or $|\#\neg\vphi|_{\varPhi_{\rm K}}=\alpha_0$, or $|\#\delta|_{\varPhi_{\rm K}}=\alpha_0$ for $\delta\in \Delta$. In each case, we obtain the claim by definition of $\mfp{\opk}$. Similarly, if $\Gamma \Ra \Delta$ is obtained with an application of ({\sc $\neg$l}), so that 
	\[
		\sststile{0,\beta_0}{\alpha_0}\Gamma \Ra\vphi, \Delta,
	\]
	in the crucial case in which $|\#\vphi|_{\opk}\leq\alpha_0$, one has that $|\#\neg\neg\vphi|_{\opk}\leq \alpha$, as required. 

\end{proof}

Lemma \ref{lem:grocon} reveals a non-standard way of thinking about `logical' consequence -- or better, satisfiability of sequents -- in Kripke models which is intrinsic to $\lpc$ and extensions thereof. If in the customary approach -- cf.~for instance the literature stemming from \cite{haho06} -- the satisfiability of a sequent $\Gamma \Ra \Delta$ is defined as preservation of truth in fixed-point models, the notion of consequence underlying the semantics of $\lpc$ is based on the existence of appropriate determinate truth values (false in the antecedent, true in the succedent). In the terminology of \cite{coba12}, this is a tolerant-strict notion of consequence. As mentioned earlier in the paper, \cite{niro18} proposed such notion of consequence and showed that this semantics is compatible with a primitive, self-applicable predicate for consequence which fully internalizes it in the object-language. A more comprehensive study of such notion, including the formulation of a compositional theory of a truth whose $\omega$-models are exactly the fixed points of $\varPhi_{\rm K}$ above, is carried out in \cite{niro20}.

Conversely, we also have that the extension of $\mfp{\varPhi}$ can be characterized in terms of $\lptinf$ proofs. 
\begin{lemma}\label{lem:mfpsys}
 If $|\#\vphi|_{\opk}\leq \alpha $, then there is an $n\in \omega$ such that $\lptinf \sststile{0,\beta}{\alpha+n}\;\Ra \vphi$, with $\beta\leq \alpha+n$.
\end{lemma}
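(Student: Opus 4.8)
The plan is to prove Lemma \ref{lem:mfpsys} by transfinite induction on $\alpha$, exploiting the close match between the disjuncts of ${\rm K}(X,x)$ and the rules of $\lptinf$ that was already observed in the proof of Lemma \ref{lem:grocon}. The statement to induct on is: for all $\vphi$ and all $\alpha$, if $|\#\vphi|_{\opk}\leq\alpha$ then $\lptinf\sststile{0,\beta}{\alpha+n}\;\Ra\vphi$ for some $n\in\omega$ and some $\beta\leq\alpha+n$. Since $|\#\vphi|_{\opk}\leq\alpha$ means $\#\vphi\in\varPhi_{\rm K}^{\alpha'}$ for some $\alpha'\leq\alpha$, and by monotonicity we may as well take $\alpha'=|\#\vphi|_{\opk}$, the hypothesis that $\#\vphi\in\varPhi_{\rm K}(\varPhi_{\rm K}^{<\alpha})$ gives us, by the definition of $\varPhi_{\rm K}$, that $(\nat,\varPhi_{\rm K}^{<\alpha})\vDash{\rm K}(X,\ovl{\#\vphi})$, so one of the disjuncts of ${\rm K}$ holds. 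The proof then proceeds by cases on which disjunct is witnessed, in each case reading off a derivation.

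First I would handle the base-ish cases: if $\vphi$ is a true closed identity $r=s$ (first disjunct), then $\;\Ra\vphi$ is an instance of $(\mbb{T})$, so it is derivable with length $0$, cut-rank $0$, and $\T$-complexity $0$; the same works for negated false identities $\neg(r=s)$ via $(\mbb{F})$ together with $(\neg\text{r})$. For the two disjuncts governing $\T$: if $\vphi\equiv\T t$ with ${\rm val}(t)=\#\psi\in\varPhi_{\rm K}^{<\alpha}$, then $|\#\psi|_{\opk}<\alpha$, so by the induction hypothesis $\lptinf\sststile{0,\beta_0}{\gamma}\;\Ra\psi$ with $\gamma<\alpha+n$-ish and $\beta_0$ controlled; one application of $(\T\text{r}^\nat)$ yields $\;\Ra\T t$ with $\tau(\T t)=\tau(\psi)+1$, pushing $\beta$ up by one and the length up by one — here it is important that $(\T\text{r}^\nat)$ only requires $t^\nat=\#\psi$ rather than a literal numeral, and one uses that ${\rm val}$ computes the evaluation. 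The dual disjunct, $\vphi\equiv\neg\T t$ with $\subdot\neg{\rm val}(t)\in\varPhi_{\rm K}^{<\alpha}$, is handled by deriving $\;\Ra\T(\subdot\neg{\rm val}(t))$ from the induction hypothesis, applying invertibility / the fact that $\T(\subdot\neg\chi)$ lets one recover $\neg\chi$ — more carefully, one uses Lemma \ref{lem:inffac}(\ref{invinf}) on the $\T$-rule to get $\;\Ra\neg\psi$ where $\#\psi$ is the relevant code, then $(\T\text{r}^\nat)$ — and reassembling with $(\neg\text{r})$; some bookkeeping with $\subdot\neg$ and ${\rm sub}$ is needed but is routine.

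The remaining disjuncts mirror the logical rules directly. For $\vphi\equiv\neg\neg\psi$ with $\#\psi\in\varPhi_{\rm K}^{<\alpha}$: induction hypothesis gives $\;\Ra\psi$, then two applications of $(\neg\text{r})$ (and, where a formula must be moved to the antecedent, $(\neg\text{l})$ together with weakening from Lemma \ref{lem:inffac}(\ref{weainf})) yield $\;\Ra\neg\neg\psi$. For $\vphi\equiv\psi\subdot\land\chi$ with both conjuncts in $\varPhi_{\rm K}^{<\alpha}$: combine the two derivations from the induction hypothesis via $(\land\text{r})$, noting that the $\T$-complexity of the conclusion is the max of the two and the length is the max plus one. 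For the negated conjunction $\neg(\psi\subdot\land\chi)$ with $\neg\psi\in\varPhi_{\rm K}^{<\alpha}$ (or $\neg\chi\in\varPhi_{\rm K}^{<\alpha}$): derive $\;\Ra\neg\psi$, hence $\psi\Ra$ by $(\neg\text{l})$-inversion — i.e.\ by Lemma \ref{lem:inffac}(\ref{invinf}) — weaken, apply $(\land\text{l})$ to get $\psi\land\chi\Ra$, then $(\neg\text{r})$. For $\vphi\equiv\forall v\,y$ a universally quantified sentence whose every closed instance lies in $\varPhi_{\rm K}^{<\alpha}$: this is the case requiring the $\omega$-rule. Each instance $\psi(t)$ has $|\#\psi(t)|_{\opk}<\alpha$, so the induction hypothesis gives, for each closed $t$, a derivation $\lptinf\sststile{0,\beta_t}{\gamma_t}\;\Ra\psi(t)$ with $\gamma_t<\alpha$ (after the $+n$ is absorbed — more precisely, one needs $\sup_t(\gamma_t)\leq\alpha$ which holds since the norms are $<\alpha$ and one can be a little careful about the $n$); one then applies $(\omega)$, obtaining length $\gamma\leq\alpha+1$ (or $\alpha$) and $\T$-complexity $\tau(\forall v\,y)=\sup_t\tau(\psi(t))\leq\beta$. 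The dual disjunct $\neg\forall v\,y$ with some closed instance of $\neg\psi$ in $\varPhi_{\rm K}^{<\alpha}$ is the witness-existential case: the induction hypothesis gives $\;\Ra\neg\psi(t)$ for the witnessing $t$, from which $\psi(t)\Ra$ by inversion, then $(\forall\text{l})$ gives $\forall v\,y\Ra$ (using that $(\forall\text{l})$ in the infinitary system instantiates at a closed term), and finally $(\neg\text{r})$.

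The main obstacle, I expect, is the bookkeeping of the extra finite shift $n$ through the transfinite induction: at limit stages and at the $\omega$-rule the lengths of the sub-derivations must have supremum $\leq\alpha$, yet the induction hypothesis only delivers length $\leq|\#\psi(t)|_{\opk}+n_t$ with $n_t$ possibly unbounded in $t$. Resolving this requires a uniform choice of $n$ — one shows that a single $n$ works for all $\vphi$, e.g.\ by proving the sharper statement that $n$ can be taken to depend only on (a bound on) the logical complexity of the outermost syntactic layer exposed, or by absorbing the finite overhead into the successor step and noting that at limit $\alpha$ the instances have norm strictly below $\alpha$ so $\sup_t(|\#\psi(t)|_{\opk}+n)=\alpha$ when $\alpha$ is a limit and $n$ is fixed. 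The secondary nuisance is that ${\rm K}$ has no primitive clause for arbitrary negations (only for $\neg\neg$, negated identities, negated conjunctions, negated $\T$-ascriptions, and negated universals), so one must check these exhaust the cases that can arise — which they do, since every sentence of $\lprt$ is, up to the shape of its principal connective, of one of these forms, and this is exactly the observation already used in Lemma \ref{lem:grocon}. Finally one must track that cut-rank stays $0$ throughout, which is immediate since no rule used is \textsc{cut}, and that $\beta\leq\alpha+n$, which follows because each rule raises $\tau$ by at most one and the depth of the derivation is $\leq\alpha+n$.
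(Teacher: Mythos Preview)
Your overall strategy---transfinite induction on $\alpha$ with a case split on the disjuncts of ${\rm K}(X,x)$, matching each to the corresponding $\lptinf$-rule---is exactly the paper's approach, and most of your case analysis is right. Two points deserve correction or comment.

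First, your treatment of the $\neg\T t$ case is muddled. The clause gives $\#(\neg\psi)\in\varPhi_{\rm K}^{<\alpha}$ with $t^\nat=\#\psi$; the induction hypothesis therefore yields $\Ra\neg\psi$ \emph{directly}---there is no need to pass through $\Ra\T(\subdot\neg{\rm val}(t))$. From $\Ra\neg\psi$ one inverts $(\neg\text{\sc r})$ (Lemma~\ref{lem:inffac}\ref{invinf}) to obtain $\psi\Ra$, then applies $(\T\text{\sc l}^\nat)$---not $(\T\text{\sc r}^\nat)$---to reach $\T t\Ra$, and finally $(\neg\text{\sc r})$ for $\Ra\neg\T t$. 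This is precisely the route the paper takes, and it is what generates the finite overhead $+n$.

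Second, the bookkeeping worry you flag about the $\omega$-rule is legitimate, and the paper is terse at exactly this point (the limit case is dispatched in a single sentence). Your absorption argument---that when $\alpha$ is a limit, $|\#\psi(t)|_{\opk}+n_t<\alpha$ regardless of $n_t$, so the supremum of the premise lengths stays $\leq\alpha$---is the right resolution and is what the paper intends. The residual case where $|\#(\forall x\psi)|_{\opk}$ is a successor with possibly unbounded $n_t$ is not spelled out in detail by either you or the paper; for the corollary that immediately follows (the equivalence between $\mfp{\opk}$-membership and cut-free $\lptinf$-derivability below $\omega_1^{\rm CK}$) the exact bound $\alpha+n$ is inessential.
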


\begin{proof}
	The proof is again by induction on $\alpha$. 
	
	If $\alpha=0$, $\vphi$ can only be $s=t$ or $s\neq t$ for closed terms $s,t$ and $s^\nat=t^\nat$ or $s^\nat \neq t^\nat$ respectively. In the latter case, one has $\sststile{0,0}{0} \Gamma,s=t\Ra \Delta$, and therefore $\sststile{}{1}\Gamma \Ra s\neq t,\Delta$. In the former, simply $\sststile{0,0}{0} \Gamma\Ra s=t, \Delta$.
	
	If $\alpha>0$ is a limit ordinal, the claim follows directly by induction hypothesis. If $\alpha$ is a successor ordinal, one consider the different clauses in $\varPhi_{\rm K}$. The mismatch between norm and length of proof is essentially required when negated formulas are considered. For instance, if $|\#\neg \T t|_{\opk}= \alpha$, then $|\#\neg\psi|_{\opk}=\alpha_0<\alpha$ with $t^\nat=\#\psi$. The induction hypothesis then yields $\sststile{0,\beta_0}{\alpha_0+m}\; \Ra \neg \psi$ for some $m$, $\beta_0<\beta$. By the inversion Lemma \ref{lem:inffac}, we obtain $\sststile{0,\beta_0}{\alpha_0+m}\; \psi\Ra $. The claim is then obtained by $(\T\text{{\sc l}}^\nat)$, and ({\sc $\neg$r}).  
	
	Similarly, if $|\#\neg\neg \psi|_{\opk}\leq \alpha$, then $|\#\psi|_{\opk}\leq \alpha_0< \alpha$. By induction hypothesis, $\sststile{0,\beta_0}{\alpha_0+m}\Ra\psi$ with $\beta_0\leq \beta$ for some $m$. By the negation rules, $\sststile{0,\beta_0}{\alpha_0+m+2}\Ra\neg\neg\psi$.
\end{proof}
By inspection of the proof of Lemma \ref{lem:mfpsys}, one notices that the following claim also holds, yielding a symmetric picture to the one depicted by Lemma \ref{lem:grocon}:
\begin{equation}
	\text{if $|\#\neg\vphi|_{\opk}\leq \alpha $, then there is an $n\in \omega$ such that $\lptinf \sststile{0,\beta}{\alpha+n}\;\vphi\Ra$, with $\beta\leq \alpha+n$.}
\end{equation}
\begin{corollary}\hfill
	\begin{enumeratei}
		\item $|\#\vphi|\in \mfp{\opk}$ if and only if $\lptinf\sststile{0,\beta}{\alpha}\;\Ra \vphi$ for some $\beta\leq\alpha<\omega_1^{\rm CK}$.
		\item $|\#\neg\vphi|\in \mfp{\opk}$ if and only if $\lptinf\sststile{0,\beta}{\alpha}\;\vphi\Ra$ for some $\beta\leq\alpha<\omega_1^{\rm CK}$.
		\end{enumeratei}
\end{corollary}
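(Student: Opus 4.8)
The plan is to obtain both biconditionals by simply assembling what is already in place: Lemma \ref{lem:grocon}, Lemma \ref{lem:mfpsys} together with the symmetric strengthening displayed immediately after its proof, and the identification $\kappa_K=\omega_1^{\rm CK}$ recorded above. No fresh transfinite induction is needed, and since the statement is phrased with cut-rank $0$ throughout, Proposition \ref{prop:ceinf} is not even invoked.

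For the forward direction of (i) I would reason as follows. If $\#\vphi\in\mfp{\opk}$, then the inductive norm $|\#\vphi|_{\opk}$ is defined; call it $\alpha$. By definition of the norm and of $\kappa_K$ we have $\alpha<\kappa_K=\omega_1^{\rm CK}$. Lemma \ref{lem:mfpsys} then yields some $n\in\omega$ and some $\beta\leq\alpha+n$ with $\lptinf\sststile{0,\beta}{\alpha+n}\;\Ra\vphi$; since $\omega_1^{\rm CK}$ is a limit ordinal above $\omega$, we still have $\alpha+n<\omega_1^{\rm CK}$, which is exactly the desired conclusion. For the converse, suppose $\lptinf\sststile{0,\beta}{\alpha}\;\Ra\vphi$ with $\beta\leq\alpha<\omega_1^{\rm CK}$. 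I would apply Lemma \ref{lem:grocon} to the sequent $\;\Ra\vphi$: its antecedent is empty, so the first disjunct of the conclusion fails vacuously, whence $|\#\vphi|_{\opk}\leq\alpha$; since the inductive norm is only defined on members of the fixed point, this already gives $\#\vphi\in\mfp{\opk}$.

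Part (ii) runs in exactly the same way, with $\neg\vphi$ in place of $\vphi$ and the roles of antecedent and succedent swapped. For the forward direction, if $\#\neg\vphi\in\mfp{\opk}$ I would set $\alpha=|\#\neg\vphi|_{\opk}<\omega_1^{\rm CK}$ and feed $\alpha$ into the displayed symmetric consequence of Lemma \ref{lem:mfpsys} --- ``if $|\#\neg\vphi|_{\opk}\leq\alpha$ then $\lptinf\sststile{0,\beta}{\alpha+n}\;\vphi\Ra$ for some $n$ with $\beta\leq\alpha+n$'' --- again absorbing the finite slack inside $\omega_1^{\rm CK}$. For the converse, given $\lptinf\sststile{0,\beta}{\alpha}\;\vphi\Ra$ with $\beta\leq\alpha<\omega_1^{\rm CK}$, I would apply Lemma \ref{lem:grocon} to $\vphi\Ra$: now the succedent is empty, so the second disjunct is unavailable, forcing $|\#\neg\vphi|_{\opk}\leq\alpha$, i.e.\ $\#\neg\vphi\in\mfp{\opk}$.

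There is no real obstacle here beyond careful bookkeeping; the two points I would be careful to spell out are that Lemma \ref{lem:grocon} is applied exactly in the cut-free regime it demands (which the corollary's hypotheses already guarantee, so that one does not need cut elimination here), and that the finite gap $n$ between inductive norm and derivation length supplied by Lemma \ref{lem:mfpsys} is harmless precisely because $\omega_1^{\rm CK}$, being a limit ordinal, is closed under adding finitely many units. It is also worth observing that the shape $\beta\leq\alpha$ of the $\T$-complexity bound in the statement matches exactly what Lemma \ref{lem:mfpsys} delivers, and is simply not used in the converse directions, so nothing is lost by stating it in this form.
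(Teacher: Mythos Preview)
Your argument is correct and is precisely the derivation the paper intends: the corollary is stated without proof immediately after Lemma~\ref{lem:mfpsys} and its symmetric variant, and is meant to follow exactly by combining Lemma~\ref{lem:grocon} (with empty antecedent, respectively succedent) and Lemma~\ref{lem:mfpsys} together with $\kappa_K=\omega_1^{\rm CK}$, absorbing the finite offset $n$ inside $\omega_1^{\rm CK}$. Your reading of ``$|\#\vphi|\in\mfp{\opk}$'' as ``$\#\vphi\in\mfp{\opk}$'' is also the intended one.
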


It is clear that, since we can straightforwardly embed $\lptn$ -- but, as mentioned in the previous subsection also an extensions of $\lptn$ with full $\lt$-induction -- in $\lptinf$, the results above also amount to soundness proofs, with respect to fixed-point semantics, of our systems modulo the notion of consequence relation specified by Lemma \ref{lem:grocon}.

\section{Conclusion}


The focus of this paper is on the structural properties of theories of fully disquotational truth with restricted initial sequents. If one finds the framework appealing for the basic logical properties presented here, there are certainly further philosophical and technical questions to be investigated. 

The kind of reasoning available in theories such as $\lpc$ and extensions thereof displays peculiar properties. First of all, the rules of inference available are entirely classical. Moreover, the systems reveal a special relationships occurring between truth ascriptions and the underlying base language which is not available in alternative formal systems for transparent truth. Philosophers often explain grounded truth in terms of a form of supervenience of truth on the non-truth-theoretic world (cf.~for instance \cite{lei05}).  Theories in the style of $\lpc$ seem to capture this idea in a particularly strong way. Essentially, the absence of initial sequents featuring the truth predicate blocks the possibility of reasoning hypothetically with arbitrary truth ascriptions. Only formulas of the base language can be freely assumed in reasoning -- cf.~Lemma \ref{lem:topbot}(iv), \eqref{eq:refq}, Remark \ref{rem:lpi}(ii).  Semantically speaking, in the context fully structural approaches, one can perform hypothetical reasoning also by employing sentences that may not have a determinate truth value.\footnote{Philosophically, this may be dealt with, for instance, as in \cite{kri75}, by applying Strawson's analysis that the hypothesis of a truth ascription should be understood as an attempt to make a claim, to express a proposition.} In the present framework this is ruled out, and hypothetical reasoning is only available for sentences that are determinately true or false, such as sentences of the base language. This does not amount to say that for \emph{no} sentence containing the truth predicate some form of hypothetical reasoning is available. The framework automatically enables one to iterate the truth predicate over sentences that are `grounded'. For instance, the inference $\T({\rm tr}(\ovl{n},\corn{0=0}))\Ra \T({\rm tr}(\ovl{n},\corn{0=0}))$ is available for any $n$ in $\lptn$, and this can be iterated into the transfinite in $\lptinf$.  Moreover, this is achieved without assigning any indices to truth predicates: hypothetical reasoning on truth is automatically grounded in non-truth-theoretic facts, even in the presence of a fully transparent truth predicate. On the other hand, it's also clear that blind hypothetical reasoning, given the undefinability of groudnedness, is only available for non-truth-theoretic sentences. It seems interesting to explore further the connections between $\lpc$, grounded truth, and the associated notion of grounded inference stemming from \cite{niro18}. 

On the logical side, a natural development consists in considering extensions of $\lptn$ with induction principles and more complex truth rules such as general compositional rules. In particular, since the presence of induction prevents full cut elimination arguments, the main focus would be on variants of Proposition \ref{prop:conse} (conservativity property) for such extensions. The main strategy needs to be modified to resemble more closely the conservativity proof-strategy followed in \cite{hal99,lei15} for the compositional, Tarskian truth theory known as $\ctr$, in which one does not require the strong invertibility properties proper of the ${\bf G3}$ systems above. The restriction of initial sequents in that context looks promising because the the counterexamples found to the general strategy in \cite{hal99} -- cf.~\cite[\S3.7]{lei15} -- involve an essential use of contraction and initial sequents involving truth ascriptions.

{\linespread{1} {
\bibliography{mybib}
\bibliographystyle{alpha}}}

\end{document}